\newcommand{\kk}{\Bbbk}
\newcommand{\upto}{,\ldots ,}
\newcommand{\betafield}{\beta_{\operatorname{field}}}
\newcommand{\Vreg}{V_{\operatorname{reg}}}
\DeclareMathOperator{\charakt}{char}
\def\SL{\operatorname{SL}}
\def\SL2{\operatorname{SL}_{2}(K)}
\def\GL2{\operatorname{GL}_{2}(K)}
\def\INVSL2{$K[V]^{operatorname{SL}_{2}(K)}$}
\def\INVSO2{$K[V]^{operatorname{SO}_{2}(K)}$}
\def\INVGL2{$K[V]^{operatorname{GL}_{2}(K)}$}
\def\GL{\operatorname{GL}}
\def\SL{\operatorname{SL}}
\def\id{\operatorname{id}}
\def\Z{\mathbb{Z}}
\newtheorem{Lemma}{Lemma}
\newtheorem{Theorem}{Theorem}
\newtheorem{Proposition}{Proposition}
\theoremstyle{definition}
  \newtheorem{Def}{Definition}
\theoremstyle{remark}
  \newtheorem{rem}{Remark}
\newtheoremstyle{Acknowledgments}% name
  {}% {\topsep}%      Space above
    {}% {\topsep}%      Space below
     {}%         Body font
     {}%         Indent amount (empty = no indent, \parindent = para indent)
    {\bfseries}% Thm head font
    {}%        Punctuation after thm head
     {.5em}%     Space after thm head: " " = normal interword space;
\theoremstyle{Acknowledgments}
\numberwithin{equation}{section}
\title[Generic separation for modular invariants]{ Generic separation for modular invariants }
\author{Fabian Reimers}
 \address{Technische Universit\"at M\"unchen, Department of Mathematics, 
Boltzmannstr.~3, 85748 Garching, Germany}
\email{reimers@ma.tum.de}
\author{M\"{u}fit Sezer}
\address{Bilkent University, Department of Mathematics\\
Cankaya, Ankara \\06800 Turkey } \email{sezer@fen.bilkent.edu.tr}
\thanks{We thank    T\"UBITAK for funding a visit of the first author to Bilkent University}
\date{\today}
\subjclass[2010]{13A50}
\keywords{Invariant theory, separating invariants, rational invariants, modular invariant theory}
\begin{document}
\begin{abstract} For modular indecomposable representations of a cyclic group~$G$ of prime order~$p$ we propose a list of polynomial invariants of degree~$\leq 3$ that, together with a simple invariant of degree $p$, separate generic orbits and generate the field of rational invariants. A similar result is proven for decomposable representations of~$G$. 
 \end{abstract}
\maketitle  

\section{Introduction}     
  Let $G$ be a finite group, and let $V$ be a finite-dimensional representation of~$G$ over a field~$\kk$. Invariant theory considers the induced action of~$G$ on the symmetric algebra~$\kk[V]$ on the dual space~$V^*$. With a basis $x_1 \upto x_n$ of $V^*$, the algebra $\kk[V] = \kk[x_1 \upto x_n]$ can be viewed as a polynomial ring with $x_1 \upto x_n$ as indeterminates, and the invariant ring is \[ \kk[V]^G = \{ f \in \kk[V] \mid g \cdot f = f \; \; \forall  g \in G  \} .\]
Typical problems are to find \emph{generating} invariants for $\kk[V]^G$ as a $\kk$-algebra, or to find \emph{separating} invariants, which by definition separate 
$G$-orbits in~$V$. Since $\kk[V]^G$ inherits the grading of the polynomial ring $\kk[V]$, one is also interested in obtaining degree bounds on generating or separating invariants such as  the well-known  Noether bound.

Motivated by applications in signal processing, a program initiated by Bandeira et al. \cite{BANDEIRA2023236} and Blum-Smith et al. \cite{BLUMSMITH2024107693} shifts the focus towards the invariant field $\kk(V)^G$ inside the rational function field $\kk(V)$. Again we are interested in explicit generators of $\kk(V)^G$, as a field extension of $\kk$, or in degree bounds on generators. Closely related, is the task of finding invariants that are \emph{generically separating}, which is defined as follows. Given a $G$-stable subset $B \subseteq V$, a set $S$ of invariant polynomials is said to be $B$-separating if for any two orbits $Gv \neq Gw$ with $v,w \in B$ there exists a polynomial $f \in S$ with $f(v) \neq f(w)$. A set $S\subseteq \kk[V]^G$ is said to be \emph{generically separating} if it is $B$-separating for some $G$-stable, nonempty, Zariski open set $B \subseteq V$. 

Let $\betafield(G,V)$ denote the smallest number $d$ such that the invariant field is generated by polynomial invariants of degree $\leq d$. Fleischmann, Kemper and Woodcock~\cite[Corollary 2.3]{FLEISCHMANN2007497} proved that $\betafield(G,V) \leq |G|$ holds independently of the characteristic of $\kk$. For the regular representation~$\Vreg$ of a finite abelian group~$G$ over a field~$\kk$ with $\charakt(\kk) \nmid |G|$, Bandeira et al.~\cite[Theorem 4.1 and Remark 4.2]{BANDEIRA2023236} showed that $\betafield(G,\Vreg) \leq 3$. Recently, this result about~$\Vreg$ was extended by Edidin and Katz~\cite[Theorem III.1]{edidin2025orbitrecoveryinvariantslow} to any finite group over any infinite field. They also showed in~\cite[Theorem 2.5]{edidin2024generic} that for the $n$-dimensional standard representation of a dihedral group $D_n$ over the complex numbers, the same upper bound of~$\betafield(D_n,V) \leq 3$ holds. On the other hand, Blum-Smith et al. \cite[Theorem 3.2]{BLUMSMITH2024107693} proved as a lower bound that we need polynomial invariants of degree at least~$\sqrt[n]{|G|}$ with $n = \dim(V)$ to generate the invariant field.
 
%In this paper we study these questions in the basic case of \emph{modular invariant theory}, a cyclic group of prime order~$p$ with a representation~$V$ over a field of characteristic~$p$. In the modular case, the invariant ring tends to be more complicated and the degrees of the generators grow fast. 

 In this paper we study these questions in the modular case, i.e., when the characteristic of $\kk$ divides the order of $G$. Compared to the non-modular case, the invariant ring tends to be more complicated and the degrees of the generators grow fast. Even in the basic example of 
a cyclic group of prime order~$p$, explicit  generators for the invariants are only known for indecomposable representations of dimension at most~5 (see Shank~\cite{Shankv4v5}) and a limited number of decomposable representations (see Wehlau~\cite{MR3085091} and the references there). On the other hand, it is possible to construct a separating set of invariants for the $n$-dimensional indecomposable representation from one for the $(n-1)$-dimensional representation (see Sezer~\cite{SezerSeparating}). In addition, a generating set for the invariant field of this group has been computed for any modular representation by Campbell and Chuai~\cite[Theorem 3.1]{MR2334859}. For the $n$-dimensional indecomposable representation their generating set contains  polynomials of each degree from one to $n-1$ and a polynomial of degree $p$. Our main result is a strengthening of this generating set. 
In Theorem~\ref{MainTheoIndecomp} we construct a generically separating set (which is also a generating set for the invariant rational functions) for the $n$-dimensional indecomposable representation consisting of~$n$  invariants, which are of degree~$\leq 3$ except for one invariant of degree~$p$.

This paper is structured as follows. In Section~\ref{SectionGenericallySep} we gather some basic results about generically separating invariants. Then we specialize to modular representations of a cyclic group of prime order. In Subsection~\ref{SubSecDeg2}, for $n$ odd, we find an invariant of degree 2 that lifts a generically separating set for the $(n-1)$-dimensional indecomposable representation to the $n$-dimensional indecomposable representation. With more computational effort, but essentially with the same ideas, we find an invariant of degree 3 that does the same job for $n$ even in Subsection~\ref{SubSecDeg3}. Finally, Subsection~\ref{SubSecMainResults} contains the main results, Theorem~\ref{MainTheoIndecomp} and Theorem~\ref{MainTheoDecomp}, and compares the invariants we found with those in the literature.

\section{Basics about generically separating invariants}\label{SectionGenericallySep}

Polynomial invariants that generate the invariant field also generate a simple localization of the invariant ring (as has been observed, e.g. in \cite{MR2334859} and in \cite{FLEISCHMANN2007497}). From this it follows easily that they are generically separating as well. For the convenience of the reader, we prove this statement here.
%note for us, more precisely: \cite[last paragraph in Proof of Theorem 2.4]{MR2334859} and \cite[]{FLEISCHMANN2007497}

Recall that throughout this paper, $G$ is a finite group and $V$ is a finite-dimensional representation of~$G$ over a field~$\kk$.

\begin{Proposition}\label{PropGenSeparatingAndFieldGen}
Consider the following properties for a list of polynomial invariants $f_1 \upto f_r \in \kk[V]^G$:
\begin{enumerate}
    \item[(1)] $f_1 \upto f_r$ generate the invariant field as a field extension of $\kk$, i.e., $\kk(V)^G = \kk(f_1 \upto f_r)$,
    \item[(2)] there exists an invariant $g \in \kk[f_1 \upto f_r] \setminus \{ 0 \}$ such that the localized ring $\kk[V]^G_g := \{ f / g^a \mid f \in \kk[V]^G, a \geq 0 \}$ is generated as a $\kk$-algebra by $f_1 \upto f_r, g^{-1}$,
    \item[(3)] $f_1 \upto f_r$ are generically separating.
\end{enumerate}
Then (1) and (2) are equivalent, and, if $\kk$ is infinite, (1) implies (3).
\end{Proposition}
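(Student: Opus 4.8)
The plan is to prove the three implications (2)$\Rightarrow$(1), (1)$\Rightarrow$(2), and (1)$\Rightarrow$(3) in that order, after recalling two standard facts about finite groups that I would state at the outset: that $\kk[V]^G$ is a finitely generated $\kk$-algebra, and that $\kk(V)^G = \Quot(\kk[V]^G)$. The latter I would justify by the usual trick: given an invariant rational function written as a quotient, multiplying numerator and denominator by the remaining $G$-translates of the denominator replaces the denominator by its $G$-norm, which is invariant, and forces the numerator to be invariant as well.

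For (2)$\Rightarrow$(1) I would pass to fraction fields. Localization does not change the fraction field, so $\Quot(\kk[V]^G_g) = \Quot(\kk[V]^G) = \kk(V)^G$. On the other hand, since $g \in \kk[f_1 \upto f_r]$, the element $g^{-1}$ already lies in the field $\kk(f_1 \upto f_r)$, so the fraction field of $\kk[f_1 \upto f_r, g^{-1}]$ is exactly $\kk(f_1 \upto f_r)$. Equating the two fraction fields yields (1). For the converse (1)$\Rightarrow$(2), I would choose algebra generators $h_1 \upto h_m$ of $\kk[V]^G$, write each as $h_i = p_i/q_i$ with $p_i, q_i \in \kk[f_1 \upto f_r]$ using (1), and set $g := \prod_i q_i \in \kk[f_1 \upto f_r] \setminus \{0\}$. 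Then every $h_i$ lies in $\kk[f_1 \upto f_r, g^{-1}]$, so $\kk[V]^G \subseteq \kk[f_1 \upto f_r, g^{-1}]$; the reverse inclusion is immediate, and localizing at $g$ gives the equality required in (2).

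For (1)$\Rightarrow$(3) I would take the invariant $g$ supplied by (2) and set $B := \{ v \in V : g(v) \neq 0 \}$. As $g$ is invariant, $B$ is $G$-stable and Zariski open, and since $\kk$ is infinite and $g \neq 0$ it is nonempty. The key observation is that for $v, w \in B$ with $f_i(v) = f_i(w)$ for all $i$, every invariant agrees on $v$ and $w$: indeed $g$ is a polynomial in $f_1 \upto f_r$, so $g(v) = g(w) \neq 0$, and by (2) any $h \in \kk[V]^G$ is a polynomial expression in $f_1 \upto f_r, g^{-1}$, whose value is therefore identical at $v$ and at $w$. It then suffices to know that $\kk[V]^G$ separates $G$-orbits, for then agreement of all invariants on $v$ and $w$ forces $Gv = Gw$, proving that $f_1 \upto f_r$ are $B$-separating and hence generically separating.

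The main obstacle is precisely the orbit-separation property of the full invariant ring, which in the modular case cannot be obtained by Reynolds averaging. I would instead use the norm construction: given $v, w$ with $Gv \cap Gw = \emptyset$, both orbits are finite and disjoint, so by interpolation one can choose $p \in \kk[V]$ that vanishes on $Gv$ and equals $1$ on $Gw$; the invariant $N(p) := \prod_{\sigma \in G} \sigma \cdot p$ then satisfies $N(p)(v) = \prod_{\sigma} p(\sigma^{-1} v) = 0$ and $N(p)(w) = \prod_{\sigma} p(\sigma^{-1} w) = 1$, so it separates the two orbits. This argument uses no division and therefore holds in arbitrary characteristic, which is exactly what the modular setting demands.
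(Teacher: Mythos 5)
Your proposal is correct and follows essentially the same route as the paper: the equivalence of (1) and (2) via the common-denominator/fraction-field argument, and (1)$\Rightarrow$(3) via the open set $B = \{ v \in V \mid g(v) \neq 0 \}$ together with a reduction to the fact that $\kk[V]^G$ separates $G$-orbits. The only difference is that you explicitly prove two standard facts that the paper cites or leaves implicit --- namely that $\kk(V)^G = \Quot(\kk[V]^G)$ and that invariants of a finite group separate orbits in arbitrary characteristic via interpolation and the norm $\prod_{\sigma \in G} \sigma \cdot p$ --- both of which are argued correctly.
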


\begin{proof}
First, let us assume (1) and show (2). Take generators $p_1 \upto p_s$ of the invariant ring as a $\kk$-algebra. By assumption, they are contained in the field generated by $f_1 \upto f_r$. Using a common denominator, we get a polynomial $g \in \kk[f_1 \upto f_r] \setminus \{ 0 \}$ and for each $i$ an invariant $h_i \in \kk[f_1 \upto f_r]$  such that $p_i = h_i / g$. Hence, we have $\kk[V]^G \subseteq \kk[f_1 \upto f_r, g^{-1}]$ and the result follows.

Conversely, if we assume (2), then the invariant ring is contained in the field generated by $f_1 \upto f_r$. As $\kk(V)^G$ is the field of fractions of $\kk[V]^G$ (see \cite[Lemma 3.11.3]{MR3445218}), this implies (1).

Now, let us assume (2) and that $\kk$ is infinite, and show (3). Define the subset \[ B := \{ v \in V \mid g(v) \neq 0 \}.\] This set is Zariski open (since $g$ is a polynomial), $G$-stable (since $g$ is an invariant) and nonempty (since $g \neq 0$ and $\kk$ is infinite). Let $v,w \in B$ be such that $f_i(v) = f_i(w)$ for $i = 1 \upto r$. Since $g \in \kk[f_1 \upto f_r]$, we get $g(v) = g(w)$. By assumption, any invariant $f \in \kk[V]^G$ can be written as $f = h / g^a$ with $h \in \kk[f_1 \upto f_r]$ and $a \geq 0$. It follows that $f(v) = f(w)$ for all invariants, and hence $v$ and $w$ are in the same $G$-orbit.
\end{proof}

If $\kk$ is algebraically closed of characteristic 0, then it is known that all conditions in Proposition~\ref{PropGenSeparatingAndFieldGen} are equivalent, as (3) implies (1) by \cite[Lemma 2.1]{popov1994invariant}. 

In characteristic $p > 0$, however, there is an obvious gap between conditions~(3) and~(1) in Proposition~\ref{PropGenSeparatingAndFieldGen}: A polynomial and its $p$-th power share the same separation capabilities, but generate different algebraic objects. 

\bigskip

Our construction of a set of generically separating invariants of low degree in Theorems~\ref{TheoremDegree2andNodd}~and~\ref{PropositionDegree3AndNeven} are based heavily on the following observation, which itself is a generalization of \cite[Theorem 1.1]{MR3078070} from \emph{separating} to \emph{$B$-separating}, as defined in the introduction.

  \begin{Proposition}\label{PropInductiveSeparationConstr}
  Let $V$, $W$ be representations of $G$ together with a $G$-equi\-variant surjection $\phi: V\rightarrow W$ and let  $\phi^*: \kk[W]\rightarrow \kk[V]$ be the corresponding inclusion. Let~$B$ be a $G$-stable subset of $V$. Furthermore, let $S\subseteq \kk[W]^G $ be a $\phi(B)$-separating set, and let $T \subseteq \kk[V]^G$ be a set with the property that for different 
   $G$-orbits $Gv_1 \neq G v_2$ with $v_1, v_2\in B$ and $\phi (v_1)=\phi (v_2)$, there exists a polynomial $f\in T$ such that $f(v_1)\neq f(v_2)$. Then $\phi^*(S) \cup T$ is $B$-separating. 
  \end{Proposition}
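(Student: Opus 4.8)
The plan is to show directly that $\phi^*(S) \cup T$ separates any two distinct orbits $Gv_1 \neq Gv_2$ with $v_1, v_2 \in B$. So I would fix such a pair and split into two cases according to whether their images under $\phi$ lie in the same $G$-orbit in $W$ or not. The whole argument is a case distinction driven by the image $\phi(v_1)$ versus $\phi(v_2)$, and the two hypotheses (that $S$ is $\phi(B)$-separating and that $T$ handles the fibers) are tailored precisely to the two cases.

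In the first case, suppose $\phi(v_1)$ and $\phi(v_2)$ lie in \emph{different} $G$-orbits of $W$. Since $\phi$ is $G$-equivariant and $B$ is $G$-stable, the images $\phi(v_1), \phi(v_2)$ lie in $\phi(B)$, which is a $G$-stable subset of $W$ (equivariance of $\phi$ guarantees $\phi(B)$ is $G$-stable). As $S$ is $\phi(B)$-separating and $G\phi(v_1) \neq G\phi(v_2)$, there exists $s \in S$ with $s(\phi(v_1)) \neq s(\phi(v_2))$. Pulling back, the invariant $\phi^*(s) = s \circ \phi \in \phi^*(S)$ satisfies $\phi^*(s)(v_1) = s(\phi(v_1)) \neq s(\phi(v_2)) = \phi^*(s)(v_2)$, so this element of $\phi^*(S)$ separates $v_1$ and $v_2$.

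In the second case, suppose $\phi(v_1)$ and $\phi(v_2)$ lie in the \emph{same} $G$-orbit, say $\phi(v_2) = g \cdot \phi(v_1)$ for some $g \in G$. Here I would replace $v_2$ by $v_2' := g^{-1} \cdot v_2$; this stays in $B$ (as $B$ is $G$-stable), lies in the same orbit as $v_2$, and by equivariance satisfies $\phi(v_2') = g^{-1}\cdot \phi(v_2) = \phi(v_1)$, so now $v_1$ and $v_2'$ share the same image under $\phi$. They still lie in different orbits, since $Gv_1 \neq Gv_2 = Gv_2'$. Thus the pair $(v_1, v_2')$ meets exactly the hypothesis placed on $T$, yielding some $f \in T$ with $f(v_1) \neq f(v_2')$. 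Because $f$ is $G$-invariant, $f(v_2') = f(g^{-1}\cdot v_2) = f(v_2)$, so $f$ separates $v_1$ and $v_2$.

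The only subtle point — and the one I would state carefully rather than leave implicit — is the reduction in the second case: the hypothesis on $T$ is phrased only for pairs with \emph{equal} image under $\phi$, so one must use the $G$-stability of $B$ and the equivariance of $\phi$ to move into the fiber over a common point, and then use $G$-invariance of $f$ to transport the separation back to the original pair. Neither case involves any real computation, so there is no serious obstacle; the work is entirely in organizing the case split and verifying that each hypothesis applies to the correctly normalized representatives.
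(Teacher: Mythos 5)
Your proof is correct and follows essentially the same argument as the paper: a case split on whether $\phi(v_1)$ and $\phi(v_2)$ lie in the same $G$-orbit, pulling back an element of $S$ in one case and applying the hypothesis on $T$ after a group translation in the other. The only cosmetic difference is that you normalize $v_2$ by $g^{-1}$ into the fiber of $\phi(v_1)$, whereas the paper moves $v_1$ by $g$; both then use $G$-invariance to transport the separation back to the original pair.
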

  \begin{proof}
  Let $v_1$ and $v_2$ be two vectors in $B$ in different $G$-orbits. If $\phi (v_1)$ and $\phi (v_2)$ are in the same $G$-orbit in $\phi (B)$, then there exists a group element $g \in G$ such that $\phi(g v_1) = g \phi(v_1) = \phi(v_2)$. By assumption, there is an invariant $f \in T$ such that $f(v_1) = f(g v_1) \neq f(v_2)$.
If $\phi(v_1)$ and $\phi(v_2)$ are in different $G$-orbits, then, by assumption, there exists an invariant $f \in S$ with $f(\phi(v_1)) \neq  f(\phi(v_2))$, and hence $\phi^*(f)$ separates $v_1$ and $v_2$. 
 \end{proof}

\section{Cyclic group of prime order}\label{SectionCyclicGroup} From here on, let $G$ denote a cyclic group of prime order~$p$, and $\kk$ a field of characteristic $p > 0$. We fix a generator~$\sigma$ of~$G$. It is well known that up to isomorphism there are exactly~$p$ indecomposable representations $V_1, V_2 \upto V_p$ of~$G$, where~$\sigma$ acts
on~$V_n$ for $ 1 \le  n \le  p$ by a Jordan block of dimension~$n$ with $1$'s on the diagonal (see, e.g., \cite[Lemma 7.1.3]{campbell2011modular}). Let $e_1, e_2 \upto e_n$ be a Jordan block basis for~$V_n$ with $\sigma (e_i) = e_i + e_{i+1}$ for $1\le   i \le  n-1$ and $\sigma (e_n) = e_n$ and let $x_1, x_2 \upto x_n$ denote the corresponding dual basis, so that
\[ \kk [V_n] = \kk [x_1, x_2,\dots , x_n].\]
Since $V_n^*$ is indecomposable it is isomorphic to $V_n$. In fact, $x_1, x_2 \upto x_n$ form a Jordan block basis for $V_n^*$ in reverse order. For convenience, we replace $\sigma$ with $\sigma^{-1}$, so that we have 
\[ \sigma (x_i) = x_{i-1} + x_i  \quad \text{ for } 2\le i \le  n \quad \text{and } \sigma (x_1) = x_1.\] 
 We identify the vectors of $V_n$ with column vectors with respect to the ordered basis $e_1 \upto e_n$. There is a $G$-equivariant surjection\begin{equation}\label{DefPhi}
\phi_n: V_n \rightarrow  V_{n-1}\, \quad  (c_1, c_2 \upto c_n)^t\rightarrow  (c_1, c_2 \upto c_{n-1})^t, 
\end{equation}
which corresponds to the inclusion $\kk[V_{n-1}] \subseteq \kk[V_n]$. Thus, there is an inclusion of invariant rings $\kk[V_{n-1}]^G \subseteq \kk[V_n]^G$ as well.

  We will work with the Zariski open set 
  \begin{equation}\label{DefBn} B_n:=\{ (c_1, c_2,..., c_n)^t \mid c_1\neq 0 \} \subseteq V_n.\end{equation} Note that $B_n$ is $G$-stable and with $\phi_n$ defined in~\eqref{DefPhi} we have $\phi (B_n)=B_{n-1}$. Our aim is to inductively construct a $B_n$-separating set in 
  $\kk [V_{n}]^G$.

\subsection{Degree two connecting invariant}\label{SubSecDeg2}

Let $W$ be the vector space in $\kk [V_n]$ spanned by monomials of degree 2. For a monomial $x_i x_j\in W$, we call $d = i+j$ its \emph{weight}. We will use the decomposition $W = \bigoplus_{d \geq 0} W_d$, where~$W_d$ denotes the subspace spanned by all degree 2 monomials of weight $d$. 

Define the operator 
\[ \Delta: \kk[V_n] \to \kk[V_n], \quad \Delta := \sigma - \id,\] whose kernel is the invariant ring. Note that for $i,j \geq 2$ we have
\[ \Delta(x_i x_j) = x_{i-1} x_{j-1} + x_{i-1} x_j + x_{i} x_{j-1}, \]
and hence for the weight spaces we get
\begin{equation}\label{DeltaWd} \Delta(W_d) \subseteq W_{d-2} + W_{d-1},
\end{equation}
which we will refine in the following two lemmas.  As a polynomial $f\in W$ decomposes as $f=\sum_df_d$ with $f_d\in W_d$, for a nonnegative integer $d$, let $\Delta_d$ be the linear map on $W$ defined by $\Delta_d(f) = (\Delta(f))_d$. 

%For a subspace $S \subseteq W$, let $S_d$ denote the subspace generated by $\{f_d \mid f\in S\}$. 

  \begin{Lemma}
  \label{LemmaDeg2andOdd}
      Let $d\le n+1$ be an odd integer. Then we have
      $$\Delta_{d-1}(W_d) = W_{d-1},$$
      in fact, the restriction $\Delta_{d-1}: W_d \to W_{d-1}$ is an isomorphism.
  \end{Lemma}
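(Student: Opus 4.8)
The plan is to reduce the statement to an explicit finite-dimensional linear-algebra computation and to exhibit the matrix of $\Delta_{d-1}$ as triangular with unit diagonal. First I would record bases for the two spaces. Since $d$ is odd, every monomial $x_i x_j$ of weight $d$ has $i \neq j$, so I may write the basis of $W_d$ as $u_i := x_i x_{d-i}$ for $1 \le i \le m$, where $m := (d-1)/2$. Likewise a basis of $W_{d-1}$ is $v_k := x_k x_{d-1-k}$ for $1 \le k \le m$, the top element being the square $v_m = x_m^2$. The hypothesis $d \le n+1$ enters precisely here: the largest index occurring in a weight-$d$ monomial is $d-1 \le n$, so no monomial is truncated for running out of variables, and one checks directly that $\dim W_d = \dim W_{d-1} = m$. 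Consequently it suffices to prove that $\Delta_{d-1} \colon W_d \to W_{d-1}$ is injective (equivalently surjective).

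Next I would compute the matrix of $\Delta_{d-1}$ in these bases, using the formula $\Delta(x_i x_j) = x_{i-1}x_{j-1} + x_{i-1}x_j + x_i x_{j-1}$ (valid for $i,j \ge 2$) together with the boundary identity $\Delta(x_1 x_j) = x_1 x_{j-1}$. Extracting the weight-$(d-1)$ component kills the term $x_{i-1}x_{j-1}$ of weight $d-2$ and leaves, for $u_i$ with $i \ge 2$,
\[ \Delta_{d-1}(u_i) = v_{i-1} + v_i, \]
while the boundary case gives $\Delta_{d-1}(u_1) = v_1$. Here one must check that each surviving monomial is indeed one of the $v_k$ after sorting its two indices; this is exactly where the square $v_m = x_m^2$ shows up, namely as a summand of the image of $u_m$. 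Thus the matrix is bidiagonal with $1$'s on the diagonal and $1$'s on the superdiagonal, hence upper triangular with unit diagonal.

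Finally I would conclude: such a matrix has determinant $1$ irrespective of the characteristic, so $\Delta_{d-1}$ is an isomorphism and in particular $\Delta_{d-1}(W_d) = W_{d-1}$. The only genuinely delicate points are bookkeeping ones: verifying that the hypothesis $d \le n+1$ guarantees that no index exceeds $n$ (so that the two dimensions match and the matrix is the full triangular one, with no rows or columns lost at the top), and correctly handling the two boundary terms — the single-term image of $u_1$ and the square term $x_m^2$ — when identifying images with basis vectors of $W_{d-1}$. Everything else is a routine unwinding of the action of $\sigma$, and the characteristic plays no role since the determinant is a unit.
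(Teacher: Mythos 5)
Your proof is correct and follows essentially the same route as the paper: you use the same ordered bases $x_i x_{d-i}$ and $x_k x_{d-1-k}$, compute $\Delta_{d-1}(u_1)=v_1$ and $\Delta_{d-1}(u_i)=v_{i-1}+v_i$ for $i\ge 2$, and conclude from the resulting upper bidiagonal (Jordan-block) matrix with unit diagonal that the restriction is an isomorphism. The boundary checks you flag (the hypothesis $d\le n+1$ ensuring equal dimensions, and the square $x_{(d-1)/2}^2$ appearing in the image of the last basis vector) are exactly the same points the paper's proof relies on.
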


  \begin{proof}
   Note that $W_0=W_1=\{0\}$, so we may take $d \geq 3$.  Consider the ordered bases $\{x_1x_{d-1},\, x_2x_{d-2} \upto x_{\frac{d-1}{2}} x_{\frac{d+1}{2}}\}$ of $W_d$ and $\{ x_1 x_{d-2},\, x_2x_{d-3} \upto x_{\frac{d-1}{2}}^2\}$ of $W_{d-1}$. Note that $\Delta(x_1 x_{d-1}) = x_1 x_{d-2}$ and for $2 \leq i < j$ with $i+j = d$ we have $\Delta_{d-1}(x_i x_j) = x_{i-1} x_j + x_i x_{j-1}$. It follows that the restriction $\Delta_{d-1}: W_d \to W_{d-1}$ is represented, with respect to these ordered bases, by an upper Jordan block with~$1$'s along the diagonal. Therefore, the result follows. 
    \end{proof}
    
 For $d$ even, we will need the following strengthening of Lemma~\ref{LemmaDeg2andOdd}.

\begin{Lemma}\label{LemmaDeg2andEven}
      Let $d\le n+1$ be an even integer and let $W_d'$ denote the subspace of $W_d$ generated by all monomials in $W_d$ except $x_1 x_{d-1}$. Then we have 
      $$\Delta_{d-1}(W'_d) = W_{d-1},$$
       in fact, the restriction $\Delta_{d-1}: W_d' \to W_{d-1}$ is an isomorphism.
  \end{Lemma}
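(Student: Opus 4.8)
The plan is to mirror the proof of Lemma~\ref{LemmaDeg2andOdd}: choose compatible ordered bases for $W_d'$ and $W_{d-1}$ and show that $\Delta_{d-1}$ is represented by a triangular matrix with nonzero determinant. First I would record the dimensions. For $d$ even the weight-$d$ monomials $x_i x_{d-i}$ with $1 \le i \le \frac{d}{2}$ span $W_d$, the last one being the square $x_{\frac{d}{2}}^2$, so $\dim W_d = \frac{d}{2}$; whereas the weight-$(d-1)$ monomials $x_a x_{d-1-a}$ with $1 \le a \le \frac{d}{2}-1$ span $W_{d-1}$, so $\dim W_{d-1} = \frac{d}{2}-1$. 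Thus $\Delta_{d-1}\colon W_d \to W_{d-1}$ can never be injective, and removing the single monomial $x_1 x_{d-1}$ to form $W_d'$ is exactly what makes the two dimensions agree. The hypothesis $d \le n+1$ is used here to guarantee that every monomial appearing (in particular $x_1 x_{d-1}$, which needs index $d-1 \le n$) lies in $\kk[V_n]$, so these are genuine bases.

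Next I would order the basis of $W_d'$ as $x_2 x_{d-2},\, x_3 x_{d-3} \upto x_{\frac{d}{2}}^2$ and that of $W_{d-1}$ as $x_1 x_{d-2},\, x_2 x_{d-3} \upto x_{\frac{d}{2}-1} x_{\frac{d}{2}}$, and compute $\Delta_{d-1}$ on each generator. Using $\Delta_{d-1}(x_i x_j) = x_{i-1} x_j + x_i x_{j-1}$ for $i,j \ge 2$, a short index check shows that for $2 \le i \le \frac{d}{2}-1$ one gets $\Delta_{d-1}(x_i x_{d-i}) = x_{i-1} x_{d-i} + x_i x_{d-i-1}$, which is the sum of the $(i-1)$-st and $i$-th basis vectors of $W_{d-1}$. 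The one exceptional generator is the square: since $\Delta(x_{\frac{d}{2}}^2) = x_{\frac{d}{2}-1}^2 + 2\,x_{\frac{d}{2}-1} x_{\frac{d}{2}}$, its weight-$(d-1)$ part is $2\,x_{\frac{d}{2}-1} x_{\frac{d}{2}}$, i.e.\ twice the last basis vector of $W_{d-1}$.

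Reading off these images, $\Delta_{d-1}\colon W_d' \to W_{d-1}$ is represented by a lower bidiagonal matrix whose diagonal entries are $1, 1 \upto 1, 2$, the final $2$ coming from the square $x_{\frac{d}{2}}^2$. Its determinant is therefore $2$, so the restriction is an isomorphism. The one point that needs care is precisely this factor $2$: the argument requires $\charakt(\kk) = p \neq 2$. This is harmless, however, since for $p = 2$ the hypothesis $d \le n+1 \le p+1 = 3$ together with $d$ even forces $d = 2$, and then $W_d' = W_{d-1} = \{0\}$, so the statement holds trivially. I expect the only genuine obstacle to be this bookkeeping around the square monomial (the source of the coefficient $2$) and the degenerate $p=2$ case; everything else is a routine transcription of the odd-case computation.
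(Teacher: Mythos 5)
Your proof is correct and follows essentially the same route as the paper's: the same ordered bases $\{x_2x_{d-2} \upto x_{\frac{d}{2}}^2\}$ and $\{x_1x_{d-2} \upto x_{\frac{d}{2}-1}x_{\frac{d}{2}}\}$, the same computation of $\Delta_{d-1}$ on each generator, and the same lower bidiagonal matrix with diagonal entries $1\upto 1,2$. Your explicit treatment of $\charakt(\kk)=2$ is a welcome clarification of a point the paper handles only implicitly (for $d\ge 4$ one has $n\ge 3$, hence $p\ge 3$, so the entry $2$ is a unit).
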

 \begin{proof}
 Note that $W_1= W'_2= \{0\}$, so we may take $d\ge 4$. Consider the ordered bases $\{x_2x_{d-2},\, x_3 x_{d-3} \upto x^2_{\frac{d}{2}}\}$ of $W_d'$ and $\{x_1 x_{d-2},\, x_2x_{d-3} \upto x_{\frac{d}{2}-1} x_{\frac{d}{2}}\}$ of $W_{d-1}$. 
   For $2 \leq i < j$ with $i+j=d$ with we have $\Delta_{d-1}(x_i x_j) = x_{i-1} x_j + x_i x_{j-1}$, and $\Delta_{d-1}(x^2_{\frac{d}{2}}) = 2 x_{\frac{d}{2}-1} x_{\frac{d}{2}}$. It follows that the restriction $\Delta_{d-1}: W_d' \to W_{d-1}$ is represented (with respect to these ordered bases) by a lower triangular matrix whose diagonal entries are~$1$'s and a single 2. This matrix has full rank, and therefore the result follows. 
\end{proof}
     
\begin{Proposition}\label{PropositionDegree2AndOdd} Let $n\ge 3$ be an odd integer. There exists a homogeneous invariant~$f_n\in \kk [V_{n}]^G$
  of degree 2 such that $f_n=x_1x_n+h$, where $h\in \kk [V_{n-1}]=\kk [x_1, x_2,\dots , x_{n-1}]$.
\end{Proposition}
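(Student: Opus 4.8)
The invariant ring is precisely the kernel of $\Delta$, so it suffices to produce a degree-$2$ polynomial $h \in \kk[V_{n-1}] = \kk[x_1 \upto x_{n-1}]$ with $\Delta(x_1 x_n + h) = 0$. A direct computation gives $\sigma(x_1 x_n) = x_1(x_{n-1} + x_n)$, hence $\Delta(x_1 x_n) = x_1 x_{n-1} \in W_n$; since $n$ is odd this residual sits in an odd weight space, and $x_1 x_n$ is the unique degree-$2$ monomial of weight $n+1$ that involves $x_n$. The plan is therefore to solve $\Delta(h) = -x_1 x_{n-1}$ inside the degree-$2$ part of $\kk[V_{n-1}]$, constructing $h = \sum_{d} h_d$ (with $h_d \in W_d$) by descending induction on the weight $d$.

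I would start at the top. Since $n+1$ is even and $n+1 \le n+1$, Lemma~\ref{LemmaDeg2andEven} provides a unique $h_{n+1} \in W'_{n+1}$ with $\Delta_n(h_{n+1}) = -x_1 x_{n-1}$; crucially $W'_{n+1}$ excludes exactly $x_1 x_n$, the only weight-$(n+1)$ monomial containing $x_n$, so $h_{n+1} \in \kk[V_{n-1}]$. This cancels the weight-$n$ part of $\Delta(x_1 x_n + h_{n+1})$ but leaves a new residual $\Delta_{n-1}(h_{n+1}) \in W_{n-1}$. I would then continue downward: given a residual $r_k \in W_k$ at weight $k \le n-1$, I cancel it by choosing $h_{k+1} \in W_{k+1}$ (via Lemma~\ref{LemmaDeg2andOdd} when $k$ is even, so $k+1$ is odd) or $h_{k+1} \in W'_{k+1}$ (via Lemma~\ref{LemmaDeg2andEven} when $k$ is odd) so that $\Delta_k(h_{k+1}) = -r_k$; this is possible because each such restriction of $\Delta_k$ is an isomorphism for $k+1 \le n+1$. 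Passing to $\Delta_{k-1}(h_{k+1})$ produces the next residual at weight $k-1$, and the alternation of parities is exactly what makes the two lemmas applicable at consecutive weights.

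Two points finish the argument. For the membership $h \in \kk[V_{n-1}]$, observe that every introduced term of weight $d \le n$ automatically avoids $x_n$ (a degree-$2$ monomial containing $x_n$ has weight $\ge n+1$), while the single weight-$(n+1)$ term lies in $W'_{n+1}$ and hence also avoids $x_n$. For termination, the residuals descend by one weight at each step and $W_1 = W_0 = \{0\}$, so the process stops with all residuals cancelled; summing the $h_d$ yields the required $h$, and $f_n = x_1 x_n + h$ is then homogeneous of degree $2$ and invariant of the asserted form. I expect the main subtlety to be the parity bookkeeping together with the verification that Lemma~\ref{LemmaDeg2andEven} supplies solutions in $W'_{n+1}$ rather than merely in $W_{n+1}$: this is precisely what prevents $x_1 x_n$ from being reintroduced into $h$ and keeps the whole construction inside $\kk[V_{n-1}]$.
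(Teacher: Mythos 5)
Your proposal is correct and follows essentially the same route as the paper's own proof: the same descending induction on weight spaces, using Lemma~\ref{LemmaDeg2andEven} at even weights (with $W'_d$ to avoid reintroducing $x_1x_n$) and Lemma~\ref{LemmaDeg2andOdd} at odd weights, with the same observations about termination at $W_1=\{0\}$ and about $x_n$-avoidance. The only difference is cosmetic (you build $h=\sum_d h_d$ directly rather than subtracting correction terms $g_j$), so there is nothing to add.
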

  
\begin{proof}

We have $\Delta_n(x_1x_n) = \Delta(x_1x_n) = x_1 x_{n-1} \in W_n$. So by Lemma~\ref{LemmaDeg2andEven} with $d = n + 1$ there exists $g_{n+1} \in W_{n+1}'$ such that $\Delta_n(g_{n+1}) = \Delta_n(x_1 x_n)$ and so $\Delta(x_1 x_n - g_{n+1}) \in W_{n-1}$ by~\eqref{DeltaWd}. By Lemma~\ref{LemmaDeg2andOdd} with $d = n$ there exists $g_n \in W_n$ such that $\Delta_{n-1}(g_{n}) = \Delta_{n-1}(x_1 x_n - g_{n+1})$ and so $\Delta(x_1 x_n - g_{n+1}  - g_n) \in W_{n-2}$ again by~\eqref{DeltaWd}.

Continuing in this fashion, inductively, but in reverse order for $i = n+1 \upto 3$, we find polynomials~$g_i$ such that
\begin{itemize}
    \item $g_i \in W_i$ (and if $i$ is even, then $g_i \in W_i'$),
    \item $\Delta(x_1x_n - \sum_{j=i}^{n+1} g_j) \in W_{i-2}$.
\end{itemize}
In the end, define $h := - \sum_{j=3}^{n+1} g_j$. Then $\Delta(x_1 x_n + h) \in W_1 = \{ 0 \}$, and hence $x_1x_n + h$ is an invariant. In addition, since $h \in W_3 \oplus \ldots \oplus W_n \oplus W_{n+1}'$, we see that no monomial of $h$ is divisible by $x_n$, so $h \in \kk[x_1 \upto x_{n-1}]$.
\end{proof}

\begin{Def}\label{DefFNofDeg2} 
We can define  $f_n$ (for $n\ge 3$ odd) as the unique invariant produced by the proof of Proposition~\ref{PropositionDegree2AndOdd}, as we note that, whenever the proof uses the surjectivity of $\Delta_{d-1}$ in Lemmas~\ref{LemmaDeg2andOdd} and Lemma~\ref{LemmaDeg2andEven}, it is, in fact, an isomorphism. 
For $n = 3$ we get
\[ f_3 = x_1 x_3 - \frac{1}{2} x_2^2 + \frac{1}{2} x_1 x_2,\]
and for $n = 5$ we get
\[ f_5 = x_1 x_5 - x_2x_4 + \frac{1}{2}x_3^2 - \frac{1}{2} x_2x_3 + \frac{3}{2} x_1x_4 + \frac{1}{4} x_2^2 - \frac{1}{4} x_1 x_2.\]
This is independent of $p$ (provided $n \leq p$), see the comments at the end of the paper about \emph{integral invariants}.
\end{Def}
  
  \begin{Theorem}\label{TheoremDegree2andNodd} 
      Let $n\ge 3$ be an odd integer, and let $B_n$ be the open set defined in~\eqref{DefBn}. Furthermore, let $S \subseteq \kk [V_{n-1}]^G$ be a $B_{n-1}$-separating set. Then $S \cup \{f_n\}$ is a $B_n$-separating set, where $f_n$ is the homogeneous invariant   in $\kk [V_{n}]^G$ of degree~2  defined just before the theorem. 
  \end{Theorem}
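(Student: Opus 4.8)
The plan is to apply the inductive separation principle, Proposition~\ref{PropInductiveSeparationConstr}, with the equivariant surjection $\phi_n : V_n \to V_{n-1}$ and the $G$-stable set $B_n$. Recall that $\phi_n(B_n) = B_{n-1}$ and that $S \subseteq \kk[V_{n-1}]^G$ is assumed to be $B_{n-1}$-separating. Identifying $S$ with $\phi_n^*(S) \subseteq \kk[V_n]^G$ via the inclusion $\kk[V_{n-1}] \subseteq \kk[V_n]$, the proposition tells us that $S \cup \{f_n\}$ is $B_n$-separating as soon as we verify the hypothesis on the single extra invariant $f_n$: namely, that whenever $v_1, v_2 \in B_n$ lie in distinct $G$-orbits but satisfy $\phi_n(v_1) = \phi_n(v_2)$, we have $f_n(v_1) \neq f_n(v_2)$. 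So the entire content of the theorem reduces to analyzing this fibre condition for $f_n$.

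The key step is therefore to understand pairs $v_1 = (c_1 \upto c_n)^t$ and $v_2 = (c_1' \upto c_n')^t$ in $B_n$ with $\phi_n(v_1) = \phi_n(v_2)$, which means $c_i = c_i'$ for all $i \leq n-1$, while only the last coordinates $c_n, c_n'$ may differ. First I would compute the difference $f_n(v_1) - f_n(v_2)$ using the shape $f_n = x_1 x_n + h$ provided by Proposition~\ref{PropositionDegree2AndOdd}, where $h \in \kk[x_1 \upto x_{n-1}]$. Since $h$ depends only on the first $n-1$ coordinates, which agree for $v_1$ and $v_2$, the $h$-contribution cancels, leaving
\[ f_n(v_1) - f_n(v_2) = x_1(v_1)\, x_n(v_1) - x_1(v_2)\, x_n(v_2) = c_1(c_n - c_n'),\]
using $c_1 = c_1'$. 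Because $v_1 \in B_n$ we have $c_1 \neq 0$, so $f_n(v_1) \neq f_n(v_2)$ holds precisely when $c_n \neq c_n'$. Thus $f_n$ separates $v_1$ and $v_2$ unless their final coordinates also coincide, i.e.\ unless $v_1 = v_2$.

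The main obstacle, and the only remaining point, is to rule out the degenerate case $c_n = c_n'$, which would force $v_1 = v_2$ and hence $Gv_1 = Gv_2$, contradicting the assumption that the orbits are distinct. In other words, I must argue that two vectors of $B_n$ in the same fibre of $\phi_n$ that are not separated by $f_n$ must actually be equal, so they trivially cannot lie in different orbits. This verifies the hypothesis of Proposition~\ref{PropInductiveSeparationConstr} for $T = \{f_n\}$. Combining this with the given $B_{n-1}$-separating property of $S$, the proposition yields that $\phi_n^*(S) \cup \{f_n\}$ is $B_n$-separating, which is exactly the assertion of the theorem. The argument is short because all the hard work was front-loaded into the construction of $f_n$ with its controlled leading term $x_1 x_n$; the linear-algebra lemmas guaranteeing such an $f_n$ exists are where the real effort lay, and here we simply exploit the resulting normal form.
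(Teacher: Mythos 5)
Your proof is correct and follows the same route as the paper's own argument: invoke Proposition~\ref{PropInductiveSeparationConstr} with $\phi_n$ and $B_n$, then use the normal form $f_n = x_1 x_n + h$ with $h \in \kk[V_{n-1}]$ to compute $f_n(v_1) - f_n(v_2) = c_1(c_n - c_n')\neq 0$ on a fibre of $\phi_n$, where $c_1 \neq 0$ since $v_1 \in B_n$. The only cosmetic difference is that you treat the case $c_n = c_n'$ explicitly as a contradiction, whereas the paper simply assumes $c_n \neq d_n$ from the outset because equal vectors trivially lie in the same orbit.
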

  \begin{proof}
 Take $f_n = x_1 x_n + h \in \kk[V_n]^G$ with $h \in \kk[V_{n-1}]$ from Proposition~\ref{PropositionDegree2AndOdd}. In view of Proposition~\ref{PropInductiveSeparationConstr}, using $\phi_n$ as defined in~\eqref{DefPhi}, it is enough to show that~$f_n$ separates any two vectors~$v_1, v_2\in B_n$ in different $G$-orbits with  $\phi_n (v_1)=\phi_n (v_2)$. So we may assume that $v_1=(c_1, c_2 \upto c_{n-1} ,c_n)^t$ and  $v_2=(c_1, c_2 \upto c_{n-1}, d_n)^t$
 with $c_1\neq 0$ and $c_n \neq d_n$. Then 
 \[ f_n(v_1) = c_1 \underbrace{c_n}_{\neq \, d_n} + h(\underbrace{\phi (v_1)}_{=\, \phi(v_2)}) \neq c_1 d_n + h(\phi (v_2)) = f_n(v_2).\]
 \end{proof}

Proposition~\ref{PropositionDegree2AndOdd} no longer holds if $n$ is even, e.g., for $n = 4$ (and hence $p \geq  5$) the space of homogeneous invariants of degree 2 is spanned by $x_1^2$ and $f_3$ (see~\cite[Theorem 4.1]{Shankv4v5}). Therefore, for even $n$ we will look for an invariant in degree~3. 

 \subsection{Degree three connecting invariants}\label{SubSecDeg3}

To construct a connecting invariant of degree 3 it will suffice to consider monomials that contain the variables $x_1$ or~$x_2$. Therefore, let $S$ be the vector space in $\kk [V_n]$ spanned by monomials of degree~3 that are divisible by $x_1$ or by $x_2$, and for an integer $d$, let $S_d$ denote the subspace in $S$ spanned by all $x_i x_j x_k$ with weight $i+j+k = d$.

Before diving into computations, the reader familiar with the previous subsection might have a glance at Proposition~\ref{PropositionDegree3AndNeven} and Theorem~\ref{TheoremDegree3AndNeven}, the analogues of Proposition~\ref{PropositionDegree2AndOdd} and Theorem~\ref{TheoremDegree2andNodd}. 

Similarly to \eqref{DeltaWd}, we have
\begin{equation}\label{DeltaSd}
    \Delta(S_d) \subseteq S_{d-3} + S_{d-2} + S_{d-1},
\end{equation}
which again we will examine more closely.  A polynomial $f\in S$ decomposes as $f=\sum_d f_d$ with $f_d \in S_d$ and for a nonnegative integer~$d$, let~$\Delta_d$ be the linear map on~$S$ defined by $\Delta_d(f) = (\Delta(f))_d$.

\begin{Lemma}\label{LemmaDeg3andOdd}
Let $d \leq n+2$ be an odd integer or $d = 4$. Then we have
\[ \Delta_{d-1}(S_d) = S_{d-1}.\]
\end{Lemma}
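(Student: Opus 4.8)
The plan is to mimic the strategy of Lemma~\ref{LemmaDeg2andOdd}, namely to exhibit ordered bases of $S_d$ and $S_{d-1}$ with respect to which the restriction $\Delta_{d-1} \colon S_d \to S_{d-1}$ is represented by a triangular matrix with nonzero diagonal, so that it has full rank. First I would pin down the spaces: $S_d$ is spanned by monomials $x_i x_j x_k$ of weight $d$ that are divisible by $x_1$ or $x_2$, so each such monomial has its smallest index equal to $1$ or $2$. I would organize these as $x_1 x_j x_k$ (with $2 \le j \le k$, $j + k = d - 1$) and $x_2 x_j x_k$ (with $2 \le j \le k$, $j + k = d - 2$), being careful to avoid double-counting monomials such as $x_1 x_2 x_k$, which is divisible by both $x_1$ and $x_2$. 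The constraint $d \le n+2$ guarantees that all the index values appearing stay within the range $1 \upto n$, so no monomial falls outside $\kk[V_n]$; this is where that hypothesis is used.

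The key computation is the formula for $\Delta_{d-1}$ on each basis monomial. Using $\Delta(x_i) = x_{i-1}$ for $i \ge 2$ and $\Delta(x_1) = 0$, expanding $\Delta = \sigma - \id$ on a product of three factors and collecting only the weight-$(d-1)$ part (i.e. the terms where exactly one index drops by one), I expect
\[
\Delta_{d-1}(x_i x_j x_k) = x_{i-1} x_j x_k + x_i x_{j-1} x_k + x_i x_j x_{k-1},
\]
with the convention that a term is dropped when a factor $x_1$ would be differentiated (since $\Delta x_1 = 0$) or, for bookkeeping, when a resulting index leaves $S$. The leading behavior is what matters: applied to $x_1 x_j x_k$, the first term vanishes and the output lies in the span of monomials still divisible by $x_1$; applied to $x_2 x_j x_k$, the term $x_1 x_j x_k$ appears and feeds into the $x_1$-block. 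I would then argue that, ordering the basis of $S_d$ suitably (say lexicographically by the first index, then by the second), the matrix of $\Delta_{d-1}$ becomes triangular with each diagonal entry a nonzero integer (the only possible departure from $1$ being a factor $2$ from a repeated index $x_m^2$, exactly as in Lemma~\ref{LemmaDeg2andEven}); hence it is surjective.

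The main obstacle I anticipate is the parity bookkeeping that forces the hypothesis ``$d$ odd or $d = 4$.'' When $d$ is odd the middle monomials $x_i x_j x_k$ with all three indices distinct pair up cleanly and the two blocks (those divisible by $x_1$ and those by $x_2$) have matching dimensions, so the triangular pattern goes through. When $d$ is even, however, square factors $x_m^2$ and the overlap monomial $x_1 x_2 x_{(d-3)}$ create a dimension mismatch between $S_d$ and $S_{d-1}$ or a vanishing diagonal entry, which is presumably why the even case is deferred (and why $d=4$ is singled out as the one small even value that still works, by direct inspection of the few monomials involved). I would therefore treat $d=4$ separately by listing $S_4$ and $S_3$ explicitly and checking surjectivity by hand, and handle general odd $d$ by the triangular-matrix argument above. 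Verifying that the dimensions genuinely match in the odd case, $\dim S_d = \dim S_{d-1}$, is the delicate counting step on which the ``isomorphism'' flavor of the argument rests.
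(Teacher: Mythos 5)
Your setup matches the paper's: the split of the monomial basis of $S_d$ into monomials divisible by $x_1$ and monomials divisible by $x_2$ but not $x_1$, and your formula for $\Delta_{d-1}$, are both correct, as is handling the small cases by inspection. But the argument breaks at exactly the step you flagged as delicate: the dimensions do \emph{not} match. For odd $d \geq 7$ the $x_1$-block of $S_d$ has $\frac{d-1}{2}$ monomials and the $x_2$-block has $\frac{d-5}{2}$, so $\dim S_d = d-3$, whereas $\dim S_{d-1} = \frac{d-3}{2} + \frac{d-5}{2} = d-4$. Thus $\Delta_{d-1}\colon S_d \to S_{d-1}$ cannot be an isomorphism (note that this lemma, unlike Lemma~\ref{LemmaDeg2andOdd}, claims only surjectivity), and for a non-square matrix ``triangular with nonzero diagonal'' is not by itself a valid certificate of full rank. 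Worse, with your lexicographic ordering the diagonal actually contains a zero: already for $d=7$, ordering $S_7$ as $x_1^2x_5,\ x_1x_2x_4,\ x_1x_3^2,\ x_2^2x_3$ and $S_6$ as $x_1^2x_4,\ x_1x_2x_3,\ x_2^3$, one computes $\Delta_6(x_1x_3^2) = 2x_1x_2x_3$, whose $x_2^3$-coordinate is zero; the first three columns of the resulting $3\times 4$ matrix have rank only $2$, so the leading square submatrix is singular and your argument does not go through.

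The missing idea, which is the actual content of the paper's proof, is to discard one vector on the domain side: the last $x_1$-monomial $x_1 (x_{\frac{d-1}{2}})^2$, whose image $2\,x_1 x_{\frac{d-3}{2}} x_{\frac{d-1}{2}}$ already lies in the span of the images of the other $x_1$-monomials. One then shows that $\Delta_{d-1}$ restricted to the span $\widehat{S_d}$ of the remaining $d-4$ monomials is represented by a \emph{square} upper-triangular matrix with $1$'s on the diagonal; surjectivity of $\Delta_{d-1}$ on all of $S_d$ follows. For the triangular shape one also needs the structural observation you only hint at: the image of an $x_1$-monomial stays inside the $x_1$-block of $S_{d-1}$, while the image of an $x_2$-monomial $x_2x_ix_j$ equals $x_1x_ix_j$ plus $x_2$-block terms, and in the block ordering these spill-over terms land strictly above the diagonal. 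Without the removal of $x_1 (x_{\frac{d-1}{2}})^2$ (or an equivalent choice of an invertible maximal square submatrix), the proof as you propose it is incomplete.
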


\begin{proof}
For small $d$ we have
\[ S_1 = S_2 = \{ 0 \}, \quad S_3 = \langle x_1^3 \rangle, \quad S_4 = \langle x_1^2 x_2 \rangle, \quad S_5 = \langle x_1^2 x_3, \ x_1 x_2^2 \rangle. \]
So the assertion is trivial for $d=1,2$. Since $\Delta(x_1^3) = 0$, the statement is also true for $d = 3$. Since $\Delta(x_1^2 x_2) = \Delta_3(x_1^2 x_2) = x_1^3$, the statement is true for $d = 4$. Since $\Delta_4(x_1^2 x_3) = x_1^2 x_2$, the statement is true for $d = 5$ as well.

Now we assume that $d$ is odd and $d \geq 7$. Then we have the following ordered basis of $S_d$, split into monomials divisible by $x_1$ and those that are not:
\begin{enumerate}
    \item[(B1)] $x_1^2 x_{d-2},\ x_1 x_2 x_{d-3}, \ \ldots , \ x_1 x_{\frac{d-3}{2}} x_{\frac{d+1}{2}}, \ x_1 (x_{\frac{d-1}{2}})^2$, 
    \item[(B2)] $x_2^2 x_{d-4},\ x_2 x_3 x_{d-5}, \ \ldots , \ x_2 x_{\frac{d-3}{2}} x_{\frac{d-1}{2}}$.
\end{enumerate}
Similarly, we have the following ordered basis of $S_{d-1}$:
\begin{enumerate}
    \item[(C1)] $x_1^2 x_{d-3},\ x_1 x_2 x_{d-4}, \ \ldots , \ x_1 x_{\frac{d-3}{2}} x_{\frac{d-1}{2}}$, 
    \item[(C2)] $x_2^2 x_{d-5},\ x_2 x_3 x_{d-6}, \ \ldots , \ x_2 (x_{\frac{d-3}{2}})^2$.
\end{enumerate}
Observe that $\Delta_{d-1}(x_1^2 x_{d-2}) = x_1^2 x_{d-3}$ and for a monomial $x_1 x_i x_j$ of (B1) with $2 \leq i \leq j$ we have
\[ \Delta_{d-1}(x_1 x_i x_j) = x_1 x_{i-1} x_j + x_1 x_i x_{j-1}.\]
So the monomials of (B1) are mapped via $\Delta_{d-1}$ to the subspace generated by (C1). Similarly, we have $\Delta_{d-1}(x_2^2 x_{d-4}) = 2x_1x_2x_{d-4}+x_2^2 x_{d-5}$ and for a monomial $x_2 x_i x_j$ of (B2) with $3 \leq i < j$ we have 
\[ \Delta_{d-1}(x_2 x_i x_j) = x_1 x_i x_j + (x_2 x_{i-1} x_j + x_2 x_i x_{j-1}).\]
We can, in fact, omit the last vector of (B1) without hurting the surjectivity. Let $\widehat{S_d} \subseteq S_d$ be the subspace spanned by all basis vectors from (B1) and (B2) except the last one of (B1), and consider the restriction $\Delta_{d-1}: \widehat{S_d} \to S_{d-1}$. By the above computation, this surjection is represented with respect to these ordered bases by
\[
\begin{pNiceArray}{cccc|cccc}
 1&1&&&& \\[-1ex] &1 & \ddots &&&\Block{2-2}<\Huge>{\ast} \\[-1ex] && \ddots & 1 \\ &&& 1 \\ \hline &&&& 1&1  \\[-1ex] &\Block{2-2}<\Huge>{0}&&&&1 &\ddots \\[-1ex] &&&&&& \ddots & 1 \\ &&&&&&&1
\end{pNiceArray}.
\]
This square matrix is upper-triangular with 1's along the diagonal. Therefore, it has full rank, and hence the map $\Delta_{d-1}: S_d \to S_{d-1}$ is surjective.
\end{proof}

For $d$ even, we will need the following strengthening of Lemma~\ref{LemmaDeg3andOdd}.

\begin{Lemma}\label{LemmaDeg3andEven}
Let $d \leq n+2$ be an even integer with $d \geq 6$, and let $S_d'$ denote the subspace of $S_d$ generated by all monomials in $S_d$ except $x_1^2 x_{d-2}$. Then we have
\[ \Delta_{d-1}(S'_d) = S_{d-1}.\]
\end{Lemma}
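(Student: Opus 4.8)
The plan is to mimic the proof of Lemma~\ref{LemmaDeg3andOdd}: I would split both the source $S_d'$ and the target $S_{d-1}$ into the monomials divisible by $x_1$ and those divisible by $x_2$ but not by $x_1$, and read off $\Delta_{d-1}$ as a block matrix. The new feature compared to the odd case is that, after deleting $x_1^2 x_{d-2}$, the two blocks are no longer square (one is tall, one is wide), so a purely triangular argument breaks down and I must instead track a single surviving scalar. I would begin by recording that $\dim S_d' = \dim S_{d-1} = d-4$ (a direct count of the bases below), so that it suffices to prove the restriction $\Delta_{d-1}\colon S_d' \to S_{d-1}$ is \emph{injective}.

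Concretely, for even $d$ the monomials of $S_d'$ divisible by $x_1$ are $x_1 x_2 x_{d-3}\upto x_1 x_{\frac{d}{2}-1} x_{\frac{d}{2}}$ (call them (B1), with $x_1^2 x_{d-2}$ deliberately absent) and those divisible only by $x_2$ are $x_2^2 x_{d-4} \upto x_2 (x_{\frac{d}{2}-1})^2$ (call them (B2)); for $S_{d-1}$ the corresponding lists are $x_1^2 x_{d-3} \upto x_1 (x_{\frac{d}{2}-1})^2$ (call them (C1)) and $x_2^2 x_{d-5} \upto x_2 x_{\frac{d}{2}-2} x_{\frac{d}{2}-1}$ (call them (C2)). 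I would then write out $\Delta_{d-1}$ on each basis vector: a (B1)-monomial satisfies $\Delta_{d-1}(x_1 x_i x_j) = x_1 x_{i-1} x_j + x_1 x_i x_{j-1}$ and lands entirely in (C1), while a (B2)-monomial produces both a (C1)- and a (C2)-part. The delicate entries are the repeated-index ones, $\Delta_{d-1}(x_2^2 x_{d-4}) = 2 x_1 x_2 x_{d-4} + x_2^2 x_{d-5}$ and $\Delta_{d-1}(x_2 (x_{\frac{d}{2}-1})^2) = x_1 (x_{\frac{d}{2}-1})^2 + 2 x_2 x_{\frac{d}{2}-2} x_{\frac{d}{2}-1}$, whose coefficients $2$ are exactly what feeds into the final computation. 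With the ordering (B1) then (B2) for the source and (C1) then (C2) for the target, the matrix has the shape $\left(\begin{smallmatrix} A & B \\ 0 & D\end{smallmatrix}\right)$ with $A$ tall and $D$ wide.

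To prove injectivity I would solve $\Delta_{d-1}\big(\sum_i u_i\,\mathrm{(B1)}_i + \sum_i v_i\,\mathrm{(B2)}_i\big)=0$ coordinatewise. Reading off the (C2)-coordinates first yields a bidiagonal system that determines the $v_i$ up to one scalar $v_2$, namely $v_i = (-1)^i v_2$ for $i \le \frac{d}{2}-2$ and $v_{\frac{d}{2}-1} = \tfrac12 (-1)^{\frac{d}{2}-1} v_2$ (the coefficient $2$ of the square monomial entering in the last (C2)-equation). Substituting into the (C1)-coordinates and solving from the top row downward, the very first (C1)-equation reads simply $u_2 = 0$ — precisely because $x_1^2 x_{d-2}$ was removed — and the recursion then expresses every $u_i$ as an explicit multiple of $v_2$; the final (C1)-equation (the one for $x_1 (x_{\frac{d}{2}-1})^2$) collapses to $\tfrac{d-3}{2}\, v_2 = 0$. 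Since $d\ge 6$ forces $n\ge 4$ and hence $p\ge 5$, and since $d$ is even with $d \le n+2 \le p+2$ (and $p+2$ odd) we get $d \le p+1$, so $0 < d-3 < p$ and therefore $\tfrac{d-3}{2} \neq 0$ in $\kk$. This forces $v_2 = 0$, whence all $v_i = 0$ and all $u_i = 0$, giving $\ker \Delta_{d-1} = 0$.

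The one genuinely exceptional value is $d=6$, where (C2) is empty and the square monomial degenerates to $x_2^3$ (contributing a coefficient $3$ rather than two $2$'s); I would dispose of it by the direct computation $\det\left(\begin{smallmatrix}1&0\\1&3\end{smallmatrix}\right)=3=d-3\neq 0$. I expect the main obstacle to be the careful bookkeeping of the three boundary rows (the first and last (C1)-equations and the last (C2)-equation) together with the repeated-index coefficients; everything hinges on the surviving scalar being $\tfrac{d-3}{2}$, and on the hypothesis $d\le n+2$ being exactly what guarantees $p\nmid(d-3)$.
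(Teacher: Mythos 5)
Your proof is correct and takes essentially the same route as the paper: both split $S_d'$ and $S_{d-1}$ into monomials divisible by $x_1$ and those divisible only by $x_2$, write $\Delta_{d-1}$ as the same block matrix with the same repeated-index coefficients, and reduce everything to the non-vanishing of $d-3$ in $\kk$ (guaranteed by $d \leq n+2 \leq p+2$). The only cosmetic difference is that you prove invertibility by showing the kernel is trivial via back-substitution (together with the dimension count $\dim S_d' = \dim S_{d-1} = d-4$), whereas the paper row-reduces the matrix to block-triangular form and computes the determinant of the remaining block to be $\pm(d-3)$; your separate check of the degenerate case $d=6$, where the basis lists collapse, is a point of care the paper's uniform argument passes over silently.
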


\begin{proof}
Since $d$ is even, we have the following ordered basis of $S_d'$, split into monomials divisible by $x_1$ and those that are not:
\begin{enumerate}
    \item[(B1)] $x_1 x_2 x_{d-3},\ x_1 x_3 x_{d-4},\ \ldots,\ x_1 x_{\frac{d}{2}-1} x_{\frac{d}{2}}$ \qquad (these are $\frac{d}{2} - 2$ many),
    \item[(B2)] $x_2^2 x_{d-4}, \ x_2 x_3 x_{d-5},\ \ldots ,\ x_2 (x_{\frac{d}{2} - 1})^2$ \qquad (these are $\frac{d}{2} - 2$ many).
\end{enumerate}
Similarly, we have the following ordered basis of $S_{d-1}$:
\begin{enumerate}
    \item[(C1)] $x_1^2 x_{d-3},\ x_1 x_2 x_{d-4},\ \ldots,\ x_1 (x_{\frac{d}{2}-1})^2$ \qquad (these are $\frac{d}{2} - 1$ many),
    \item[(C2)] $x_2^2 x_{d-5},\ x_2 x_3 x_{d-6},\ \ldots,\ x_2 x_{\frac{d}{2} - 2} x_{\frac{d}{2} - 1}$ \qquad (these are $\frac{d}{2} - 3$ many),
\end{enumerate}
Both spaces have dimension $d-4$. A computation of $\Delta_{d-1}$ similar as in the proof of the previous lemma shows that restriction $\Delta_{d-1}: S_d' \to S_{d-1}$ with respect to these ordered bases is given by the following block matrix (not divided into squares at this point!):
\[ A = \begin{pNiceArray}{ccccc|ccccc}
    1 &&&&&0 &\\ 1 &1&&&&2 \\[-1.2ex] & 1& \ddots &&&& 1\\[-1ex] & & \ddots & 1 &&& & \ddots \\ &&& 1 &1&&&&1& \\  &&&  &1&&&&&1 \\ \hline &&&&&1&1 \\[-1ex] &&\Block{2-1}<\Huge>{0}&&&&1&\ddots \\[-1ex] &&&&&&&\ddots&1\\&&&&&&&&1&2
\end{pNiceArray}.\]  
From the $(\frac{d}{2}-2)$-th row of $A$ (that is the row right above the horizontal line) we subtract row $\frac{d}{2}-3$, add row $\frac{d}{2}-4$, subtract row $\frac{d}{2}-5$ and so on until we add or subtract row 1. We move this modified row below the horizontal line in the matrix and get:
\[ \begin{pNiceArray}{ccccc|ccccc}
    1 &&&&&0 &\\ 1 &1&&&&2 \\[-1ex] & 1& \ddots &&&& 1\\[-1ex] & & \ddots & 1 &&& & \ddots \\ &&& 1 &1&&&&1& \\\hline  &&&  &&\pm 2&\mp 1& \ldots&-1&1 \\  &&\Block{3-1}<\Huge>{0}&&&1&1 \\[-1.3ex] &&&&&&1&\ddots \\[-1ex] &&&&&&&\ddots&1\\&&&&&&&&1&2
\end{pNiceArray}.\]  
This block matrix is divided into square blocks. Its determinant is given by the determinant of the lower right block, which up to sign is the determinant of 
\[ \begin{pmatrix}
    2&-1&1 & \ldots & (-1)^{d/2} & (-1)^{d/2+1} \\ 1&1 &  & \\ &1&1\\ && \ddots & \ddots \\ &&& \ddots & 1 \\ &&&&1&2
\end{pmatrix}.\]
This matrix has size $\frac{d}{2}-2$, and we successively subtract column $i$ from column $i+1$ for $i = 1 \upto \frac{d}{2}-4$ to obtain
\[ \begin{pmatrix}
    2&-3&4 & \ldots & (-1)^{d/2} (\frac{d}{2}-2) & (-1)^{d/2+1} \\ 1&0 &  & \\ &1&0\\ && \ddots & \ddots \\ &&& \ddots & 0 \\ &&&&1&2
\end{pmatrix}.\]
The determinant of this matrix is $(-1)^{d/2} (2 (\frac{d}{2} - 2) + 1) = \pm (d-3).$ This is non-zero, because $d \leq n+2$ and $n \leq p = \charakt(\kk)$. So the matrix $A$ representing $\Delta_{d-1}: S_d' \to S_{d-1}$ has full rank, and hence $\Delta_{d-1} : S_d' \to S_{d-1}$ is an isomorphism.    
\end{proof}

       \begin{Proposition}\label{PropositionDegree3AndNeven}
      Let $n\ge 4$ be an even integer. There exists a homogeneous invariant $f_n \in \kk [V_{n}]^G$  of degree 3 such that $f_n = x^2_1 x_n + h$, where $h\in \kk [V_{n-1}]=\kk [x_1, x_2,\dots , x_{n-1}]$.
  \end{Proposition}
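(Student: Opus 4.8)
The plan is to mirror the proof of Proposition~\ref{PropositionDegree2AndOdd} almost verbatim, replacing the degree-2 weight spaces $W_d, W_d'$ by the degree-3 spaces $S_d, S_d'$ and the two degree-2 isomorphism lemmas by Lemma~\ref{LemmaDeg3andOdd} and Lemma~\ref{LemmaDeg3andEven}. The starting observation is that $\sigma$ fixes $x_1$, so $\Delta(x_1^2 x_n) = x_1^2\,\Delta(x_n) = x_1^2 x_{n-1}$, a single monomial of weight $n+1$ lying in $S_{n+1}$; since $n$ is even, this top weight $n+1$ is odd. The idea is then to peel off one weight at a time, from the top down, until $\Delta$ is driven into $S_1 + S_2 = \{0\}$, at which point the accumulated polynomial is invariant.

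Concretely, I would prove by downward induction on $i$, from $n+2$ down to $4$, the existence of $g_i \in S_i$ (with $g_i \in S_i'$ when $i$ is even and $i \geq 6$) such that $\Delta\bigl(x_1^2 x_n - \sum_{j=i}^{n+2} g_j\bigr) \in S_{i-3} + S_{i-2}$, using~\eqref{DeltaSd} to control how far $\Delta$ spreads. At each stage I kill the current top weight $i-1$ by choosing $g_i$ so that $\Delta_{i-1}(g_i)$ equals the weight-$(i-1)$ component of what remains; the solvability of this is exactly the surjectivity supplied by the lemmas. The parity of $i$ dictates which lemma to call: when $i$ is odd (so the killed weight $i-1$ is even) I invoke Lemma~\ref{LemmaDeg3andOdd} with $d = i$, and when $i$ is even with $i \geq 6$ I invoke Lemma~\ref{LemmaDeg3andEven} with $d = i$, taking $g_i \in S_i'$. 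The first step uses $d = n+2$, and the final step $i = 4$ uses the exceptional clause $d = 4$ of Lemma~\ref{LemmaDeg3andOdd}, with $g_4 \in S_4$.

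Setting $h := -\sum_{j=4}^{n+2} g_j$, the induction terminates with $\Delta(x_1^2 x_n + h) \in S_1 + S_2 = \{0\}$, so $f_n := x_1^2 x_n + h$ is a homogeneous invariant of degree~3. It then remains to check that $h \in \kk[x_1 \upto x_{n-1}]$, i.e., that no monomial of $h$ is divisible by $x_n$. A degree-3 monomial divisible by $x_n$ has weight at least $n+2$, so the summands $g_i$ with $i \leq n+1$ contribute no such monomial; and the unique weight-$(n+2)$ monomial divisible by $x_n$, namely $x_1^2 x_n$, is precisely the monomial excluded from $S_{n+2}'$, where $g_{n+2}$ was chosen to live. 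Hence $h$ involves only $x_1 \upto x_{n-1}$, as required.

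The hard part will not be any single computation but the bookkeeping at the boundary: one must verify that every value $d = i$ arising in the recursion falls within the hypotheses of the lemma invoked. In particular the even lemma must only ever be applied for $d \geq 6$ (forcing $n \geq 4$, which is our standing assumption), the weight-3 step must be absorbed by the exceptional $d = 4$ clause of the odd lemma rather than the even lemma, and the bound $d \leq n+2$ together with the hidden constraint $n \leq p$ in Lemma~\ref{LemmaDeg3andEven} must hold throughout. Once these parities and small-$d$ endpoints are matched up correctly, the argument runs entirely parallel to the degree-2 case.
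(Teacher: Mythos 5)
Your proposal is correct and follows essentially the same route as the paper's own proof: the same top-down induction on the weight $i$ from $n+2$ to $4$, killing the top weight component of $\Delta$ at each step via the surjectivity in Lemma~\ref{LemmaDeg3andOdd} (for $d$ odd, and the exceptional $d=4$ clause at the last step) and Lemma~\ref{LemmaDeg3andEven} (for $d$ even, $d \geq 6$, with $g_d \in S_d'$), followed by the observation that $h \in S_4 \oplus \cdots \oplus S_{n+1} \oplus S_{n+2}'$ contains no monomial divisible by $x_n$. Your boundary bookkeeping (which lemma applies at which parity, the role of $S_{n+2}'$ in excluding $x_1^2 x_n$, and the constraint $d \leq n+2 \leq p+2$ needed for $d-3 \neq 0$) matches the paper's argument exactly.
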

  
  \begin{proof}
We have $\Delta_{n+1}(x_1^2 x_n) = \Delta(x_1^2 x_n) = x_1^2 x_{n-1} \in S_{n+1}$. So by Lemma~\ref{LemmaDeg3andEven} with $d = n+2$ there exists $g_{n+2} \in S_{n+2}'$ such that $\Delta_{n+1}(g_{n+2}) = \Delta_{n+1}(x_1^2 x_n)$ and so $\Delta(x_1 x_n^2 
- g_{n+2}) \in S_{n-1} + S_n$ by \eqref{DeltaSd}. By Lemma~\ref{LemmaDeg3andOdd} with $d = n+1$, there exists $g_{n+1} \in S_{n+1}$ such that $\Delta_n(x_1 x_n^2)=\Delta_n( g_{n+2})$  and so $\Delta(x_1^2 x_n - g_{n+2} - g_{n+1}) \in S_{n-2} + S_{n-1}$ again by \eqref{DeltaSd}.

Continuing in this fashion, inductively, but in reverse order for $i = n + 2 \upto 4$ we find polynomials $g_i$ such that
\begin{itemize}
    \item $g_i \in S_i$ (and if $i$ is even and $i > 4$, then $g_i \in S_i'$),
    \item $\Delta(x_1^2 x_n - \sum_{j=i}^{n+2} g_j) \in S_{i-3} + S_{i-2}$.
\end{itemize}
In the end, define $h := - \sum_{j =4}^{n+2} g_j$. Then $\Delta(x_1^2 x_n + h) \in S_{1} + S_2 = \{0\}$, and hence $x_1^2 x_n + h$ is an invariant. In addition, since $h \in S_4 \oplus \ldots \oplus S_{n+1} \oplus S_{n+2}'$, we see that no monomial of $h$ is divisible by $x_n$, so $h \in \kk[x_1 \upto x_{n-1}]$.
\end{proof} 

%  we have $\Delta_{n+1,1}x^2_1x_n=x_1^2x_{n-1}\in S_{n+1,1}$. By the previous lemma there exists $f_{n+2,1}\in S'_{n+2,1}$ and a scalar $c$ such that $\Delta_{n+1,1}x^2_1x_n=\Delta_{n+1,1}f_{n+2,1}+c\Delta_{d-1,1}g$. Since $x^2_1x_n\in S_{n+2,1}\setminus S'_{n+2,1}$ and $g\in S_{n+2,2}$ we have $x^2_1x_n-f_{n+2,1}-cg\neq 0$ and $\Delta (x^2_1x_n-f_{n+2,1}-cg)\in \sum_{d\le n} S_d$. By Lemma \ref{oddb}, there exists $f_{n+1,2}\in S_{n+1,2}$ such that $\Delta(x^2_1x_n-f_{n+2,1}-f_{n+1,2}-cg)\in S_{n,1}+\sum_{d\le n-1} S_d$. Then by Lemma \ref{evenb},
%  there exists $f_{n+1,1}\in S_{n+1,1}$ such that $\Delta(x^2_1x_n-f_{n+2,1}-f_{n+1,2}-f_{n+1,1}-cg)\in \sum_{d\le n-1} S_d$. Continuing this process we find $f_{d,i}$ with $d\le n+1$  and $i=1,2$ such that
 % $$\Delta (f-cg-f_{n+2,1}-\sum_{d\le n+1, \; i=1,2} f_{d,i})=0.$$

  \begin{Def}\label{DefFNofDeg3}
As in Definition~\ref{DefFNofDeg2}, we can define $f_n$ (for $n\ge 4$ even) as the unique invariant produced by the proof of Proposition~\ref{PropositionDegree3AndNeven} if we add that $\Delta_{d-1}$ restricted to~$S_d'$ (in Lemma~\ref{LemmaDeg3andEven}) or to $\widehat{S_d}$ (in the proof of Lemma~\ref{LemmaDeg3andOdd}) is, in fact, an isomorphism. For $n = 4$ we get
\[ f_4 = x_1^2 x_4 - x_1 x_2 x_3 + \frac{1}{3} x_2^3  - \frac{1}{3} x_1^2 x_2.\]
  \end{Def}

%A $B_n$-separating set can be obtained by adding a specific degree three invariant to any $B_{n-1}$-separating set. 
  
  \begin{Theorem}\label{TheoremDegree3AndNeven}
      Let $n\ge 4$ be an even integer, and let $B_n$ be the open set defined in~\eqref{DefBn}. Furthermore, let
      $S \subseteq \kk [V_{n-1}]^G$ be a $B_{n-1}$-separating set. Then $S \cup \{f_n\}$ is a $B_n$-separating set, where $f_n$ is the homogeneous invariant   in $\kk [V_{n}]^G$ of degree~3  defined just before the theorem.
  \end{Theorem}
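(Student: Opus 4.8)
The plan is to mirror exactly the proof of Theorem~\ref{TheoremDegree2andNodd}, invoking the inductive separation principle of Proposition~\ref{PropInductiveSeparationConstr} with the $G$-equivariant surjection $\phi_n$ from~\eqref{DefPhi}, the given $B_{n-1}$-separating set $S \subseteq \kk[V_{n-1}]^G$, and the singleton $T = \{f_n\}$. Since $\phi_n(B_n) = B_{n-1}$, the requirement that $S$ be $\phi_n(B_n)$-separating is precisely the hypothesis of the theorem. Thus the only thing left to verify is the condition on $T$ in Proposition~\ref{PropInductiveSeparationConstr}: that for any two vectors $v_1, v_2 \in B_n$ lying in distinct $G$-orbits with $\phi_n(v_1) = \phi_n(v_2)$, the invariant $f_n$ takes different values.

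First I would spell out what $\phi_n(v_1) = \phi_n(v_2)$ together with $v_1 \neq v_2$ forces: the two vectors must agree in their first $n-1$ coordinates and differ in the last. So I may write $v_1 = (c_1, c_2 \upto c_{n-1}, c_n)^t$ and $v_2 = (c_1, c_2 \upto c_{n-1}, d_n)^t$ with $c_1 \neq 0$ (because both lie in $B_n$) and $c_n \neq d_n$. Next I would evaluate $f_n = x_1^2 x_n + h$ on these points, using the decisive structural fact from Proposition~\ref{PropositionDegree3AndNeven} that $h \in \kk[x_1 \upto x_{n-1}]$ involves none of the last variable. Consequently $h(v_1) = h(\phi_n(v_1)) = h(\phi_n(v_2)) = h(v_2)$, and the difference collapses to
\[ f_n(v_1) - f_n(v_2) = c_1^2 (c_n - d_n). \]
Since $c_1 \neq 0$ and $c_n \neq d_n$, this is nonzero, so $f_n$ separates $v_1$ and $v_2$, and the hypothesis on $T$ holds.

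There is essentially no obstacle remaining at this stage: all the genuine difficulty has been front-loaded into Proposition~\ref{PropositionDegree3AndNeven}, whose conclusion---the existence of a degree-3 invariant of the precise shape $x_1^2 x_n + h$ with $h$ free of $x_n$---is exactly what makes the separation argument go through. The only point worth flagging is that the argument needs the coefficient $c_1^2$ to be nonzero rather than merely $c_1 \neq 0$; over a field this is immediate, and it is precisely the reason a degree-3 connecting invariant with leading term $x_1^2 x_n$ does the same job for even $n$ that the degree-2 invariant with leading term $x_1 x_n$ did for odd $n$ in Theorem~\ref{TheoremDegree2andNodd}.
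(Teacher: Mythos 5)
Your proposal is correct and is essentially identical to the paper's own argument: the paper proves Theorem~\ref{TheoremDegree3AndNeven} by noting that the proof of Theorem~\ref{TheoremDegree2andNodd} carries over verbatim, i.e., by applying Proposition~\ref{PropInductiveSeparationConstr} with $\phi_n$ and reducing to the evaluation $f_n(v_1) - f_n(v_2) = c_1^2(c_n - d_n) \neq 0$, exactly as you do. Nothing is missing.
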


\begin{proof}
    The proof of Theorem \ref{TheoremDegree2andNodd} carries over verbatim, except that we now take the invariant $f_n$ from Proposition~\ref{PropositionDegree3AndNeven}. 
\end{proof}

Proposition~\ref{PropositionDegree3AndNeven} does not hold for $n = 2$   where the invariant ring is generated as a $\kk$-algebra by $x_1$ and $N(x_2) = x_2^p-x_1^{p-1}$, the \emph{norm} of $x_2$, and hence no invariant of degree 3 of that form exists.

\subsection{Main results}\label{SubSecMainResults}

\begin{Theorem}\label{MainTheoIndecomp}
Let $G$ be a cyclic group of prime order $p$, and $V = V_n$ an indecomposable representation of $G$ over a field $\kk$ of characteristic $p$ with $\dim(V) = n < \infty$. Then 
\begin{equation}\label{Invariants} x_1, \ N(x_2)=x_1^{p-1}x_2 - x_2^p, \ f_3 , \ \ldots , \ f_n \end{equation}
are generically separating and generate the invariant field $\kk(V)^G$, where $f_m$ is the  homogeneous invariant of degree 2 or 3  from Definition~ \ref{DefFNofDeg2} and~\ref{DefFNofDeg3}  for $3\le m\le n$. 
\end{Theorem}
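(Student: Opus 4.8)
The plan is to establish the two assertions separately, since in characteristic~$p$ they differ genuinely in strength. For generic separation I would show, by induction on~$n$, that the list in~\eqref{Invariants} is $B_n$-separating for the open set~$B_n$ of~\eqref{DefBn}; as $B_n$ is $G$-stable, nonempty and Zariski open, this is precisely the assertion that the list is generically separating. The inductive step is handed to us directly by Theorem~\ref{TheoremDegree2andNodd} (for odd~$n$) and Theorem~\ref{TheoremDegree3AndNeven} (for even~$n$): if $x_1, N(x_2), f_3 \upto f_{n-1}$ is $B_{n-1}$-separating, then adjoining $f_n$ produces a $B_n$-separating set.

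It remains to treat the base case $n=2$ (the case $n=1$ being trivial, as then $G$ acts trivially and $x_1$ separates everything). Here I would invoke Proposition~\ref{PropInductiveSeparationConstr} for $\phi_2\colon V_2\to V_1$ with $S=\{x_1\}$, which is $\phi_2(B_2)=B_1$-separating since $G$ acts trivially on $V_1$, and with $T=\{N(x_2)\}$. The only point to verify is that $N(x_2)$ separates two vectors $v_1=(c_1,c_2)^t$ and $v_2=(c_1,d_2)^t$ of $B_2$ in distinct orbits. Putting $u=c_2-d_2$ and using the Frobenius identity $c_2^p-d_2^p=u^p$, one finds
\[ N(x_2)(v_1)-N(x_2)(v_2)=c_1^{p-1}u-u^p=u\bigl(c_1^{p-1}-u^{p-1}\bigr),\]
which vanishes exactly when $u/c_1\in\FF_p$, that is, exactly when $v_1$ and $v_2$ lie in the same orbit. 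Hence $N(x_2)$ separates them and $\{x_1,N(x_2)\}$ is $B_2$-separating.

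For the field generation I would argue by a degree count. Set $L:=\kk(x_1,N(x_2),f_3\upto f_n)$, so that $L\subseteq\kk(V_n)^G$. First I recover all coordinates: $x_1\in L$ and $x_2\in L(x_2)$ by definition, while for $m\ge 3$ the shape $f_m=x_1x_m+h$ (for $m$ odd) or $f_m=x_1^2x_m+h$ (for $m$ even) with $h\in\kk[x_1\upto x_{m-1}]$ lets me solve for $x_m$; inducting on $m$ gives $x_m\in L(x_2)$ for every $m$, so $L(x_2)=\kk(V_n)$. Next, $x_2$ is a root of the degree-$p$ polynomial $T^p-x_1^{p-1}T+N(x_2)\in L[T]$, whence $[L(x_2):L]\le p$. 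Finally, $G$ acts faithfully on $\kk(V_n)$ for $n\ge 2$, so Artin's theorem gives $[\kk(V_n):\kk(V_n)^G]=p$. In the tower $L\subseteq\kk(V_n)^G\subseteq\kk(V_n)$ this forces
\[ p\le[\kk(V_n):L]=[L(x_2):L]\le p,\]
so that $[\kk(V_n)^G:L]=1$ and therefore $L=\kk(V_n)^G$.

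The main obstacle is the field-generation half. It is tempting to read it off from generic separation, but Proposition~\ref{PropGenSeparatingAndFieldGen} only yields the implication~(1)$\Rightarrow$(3), and in characteristic~$p$ the reverse fails (a polynomial and its $p$-th power separate identically). So the degree count is indispensable, and its quantitative heart is that $x_2$ generates $\kk(V_n)$ over $L$ with degree at most $p=|G|$, matched from below by Artin's theorem. It is exactly the norm $N(x_2)$, of degree $p$, rather than any lower-degree invariant, that supplies this upper bound, which is why a degree-$p$ invariant cannot be dispensed with.
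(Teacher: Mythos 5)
Your proposal is correct, and it splits into one half that mirrors the paper and one half that is genuinely different. The generic-separation half is essentially the paper's own argument: both induct on $n$ with Theorems~\ref{TheoremDegree2andNodd} and~\ref{TheoremDegree3AndNeven} supplying the inductive step; you merely work out the base case $n=2$ by hand (via Proposition~\ref{PropInductiveSeparationConstr} and the identity $N(x_2)(v_1)-N(x_2)(v_2)=u\bigl(c_1^{p-1}-u^{p-1}\bigr)$, which is a correct computation), where the paper instead quotes the known fact that $\kk[V_2]^G$ is generated as an algebra by $x_1$ and $N(x_2)$. The field-generation half is where you diverge: the paper verifies the hypotheses of a criterion of Campbell and Chuai \cite[Theorem 2.4]{MR2334859} --- each $f_m$ has the smallest positive degree (namely one) in $x_m$, and $N(x_2)$ the smallest positive degree in $x_2$ --- and then cites that theorem, whereas you give a self-contained degree count: the triangular shapes $f_m = x_1 x_m + h$ resp.\ $x_1^2 x_m + h$ with $h\in\kk[x_1\upto x_{m-1}]$ yield $L(x_2)=\kk(V_n)$ for $L=\kk(x_1,N(x_2),f_3\upto f_n)$; the relation $x_2^p - x_1^{p-1}x_2 + N(x_2)=0$ gives $[L(x_2):L]\le p$; and Artin's theorem (the action is faithful for $n\ge 2$) gives $[\kk(V_n):\kk(V_n)^G]=p$, which squeezes $[\kk(V_n)^G:L]=1$ in the tower $L\subseteq\kk(V_n)^G\subseteq\kk(V_n)$. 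In effect you have reproved, in the special case at hand, the Campbell--Chuai criterion the paper invokes. The paper's route buys brevity; yours buys self-containedness and makes transparent exactly where the degree-$p$ invariant carries the Galois-theoretic weight --- and your closing observation that Proposition~\ref{PropGenSeparatingAndFieldGen} only gives (1)$\Rightarrow$(3) in characteristic $p$, so that field generation cannot be read off from separation, is precisely the point the paper itself makes after that proposition.
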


\begin{proof}
We show that these invariants are separating on the Zariski open, $G$-stable subset $B_n$ defined in Equation \eqref{DefBn}.  For $n = 2$ the invariant ring is generated (as an $\kk$-algebra) by $x_1$ and $N(x_2)$, so they are both separating and generating for the field extension $\kk \subseteq \kk(V)^G$.  Assume that $m\ge 3$ and
$x_1, N(x_2),  f_3,  \ldots, \ f_{m-1}$ is $B_{m-1}$-separating. Then 
$x_1, N(x_2),  f_3,  \ldots, \ f_{m-1}, f_m$ is $B_m$-separating by Theorem~\ref{TheoremDegree2andNodd} if $m$ is odd or by Theorem \ref{TheoremDegree3AndNeven} if $m$ is even. Hence, the assertion of the theorem on generic separation follows by induction.

To show that the invariants in \eqref{Invariants} generate the field of invariants, we use a criterion from Campbell and Chuai~\cite{MR2334859}. Note that for $3\le m\le n$, the invariant $f_m$ lies in $\kk[x_1 \upto x_{m}]$ and, considered as a polynomial in $x_m$ with coefficients in $\kk[x_1 \upto x_{m-1}]$,  is of degree 1 in $x_m$, hence of the smallest positive degree in $x_m$. In addition, $N(x_2)$ is in $\kk[x_1,x_{2}]$ and  is of the smallest positive degree (considered as a polynomial in $x_2$) in~$x_2$. With this, the result follows from~\cite[Theorem 2.4]{MR2334859}.
\end{proof}

\begin{rem}
A generating set of polynomials for the invariant field for the case considered in the theorem is also given by \cite[Theorem 3.1]{MR2334859}. The generating set in this source contains (in addition to $x_1$ and $N(x_2)$)  homogeneous  polynomials that are called Shank invariants.  They were  were constructed in \cite[Theorem 2.3]{MR2334859} and are of the form $x_1^{m-2}x_m+h_m$ for  $3\le m\le n$, where  $h_m\in \kk[x_1 \upto x_{m-1}]$. This generating set is very similar to the generating set in the theorem but contains polynomials of each degree from 1 to $n-1$.
\end{rem}

\begin{rem}
The indecomposable representation of maximal dimension $n = p$ is isomorphic to the regular representation of~$G$. Hence, it follows from~\cite{edidin2025orbitrecoveryinvariantslow} that the invariant field $\kk(V_p)^G$ is generated by polynomial invariants of degree $\leq 3$. %However, we do not know if this is possible with only $n$ generators as in Theorem~\ref{MainTheoIndecomp}. \commentF{Do we need / want to think about this sentence (''However...'')? Or maybe delete it if  we are just not sure?}

The invariant $N(x_2)$ in Theorem~\ref{MainTheoIndecomp} catches the eye as the only polynomial of degree higher than~3. So, it is tempting to try to reduce it, but the lower bound $\betafield(V,G) \geq \sqrt[n]{p}$ is proven in~\cite[Theorem 3.2]{BLUMSMITH2024107693}.  It follows that there does not exist a constant $c$ such that $\kk(V_n)^G$ is generated by polynomials of degree $\leq c$ for all primes $p$. So we cannot replace $N(x_2)$ with another polynomial whose degree does not depend on $p$.
\end{rem}

As any finite-dimensional representation~$V$ of~$G$ decomposes uniquely into indecomposable subrepresentations, we get the following result.

\begin{Theorem}\label{MainTheoDecomp}
Let $G$ be a cyclic group of prime order $p$, and $V$ representation of~$G$ over a field $\kk$ of characteristic $p$ with $\dim(V) = n < \infty$. Let~$r$ denote the number of trivial summands in $V$, and let~$m$ denote the number of nontrivial, indecomposable summands of $V$. Then there exist $n$ homogeneous and generically separating invariants consisting of
\begin{itemize}
\item $m+r = \dim(V^G)$ invariants of degree 1
\item $m$ invariants of degree $p$
\item $n-2m-r$ invariants of degree $2$ or $3$.
\end{itemize}
Furthermore, these $n$ invariants generate the invariant field.
\end{Theorem}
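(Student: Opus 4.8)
The plan is to reduce the decomposable case to the indecomposable case already handled in Theorem~\ref{MainTheoIndecomp}, using the inductive separation machinery of Proposition~\ref{PropInductiveSeparationConstr}. Write $V = V_{n_1} \oplus \cdots \oplus V_{n_m} \oplus V_1^{\oplus r}$ as a sum of indecomposables, where the trivial summands $V_1$ are collected at the end and each $V_{n_i}$ with $n_i \geq 2$ is nontrivial. The counting in the statement is then transparent: each nontrivial summand $V_{n_i}$ contributes, via Theorem~\ref{MainTheoIndecomp}, one degree-$1$ invariant (its $x_1$-coordinate), one degree-$p$ invariant (its norm $N(x_2)$), and $n_i - 2$ invariants of degree $2$ or $3$; each trivial summand contributes a single degree-$1$ coordinate. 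Summing gives $m$ invariants of degree $p$, $\sum_i(n_i - 2) = (n - r) - 2m$ invariants of degree $2$ or $3$, and $m + r = \dim(V^G)$ invariants of degree~$1$, matching the three bullet points.

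The key step is to build the separating set summand by summand. First I would handle the direct sum of the nontrivial indecomposables, proceeding by induction on $m$ along a chain of equivariant projections analogous to the~$\phi_n$ of~\eqref{DefPhi}. Concretely, for $U = U' \oplus V_{n_m}$ with $U'$ a sum of fewer indecomposables, use the projection $\phi\colon U \to U'$ and the section given by the natural inclusion, and apply Proposition~\ref{PropInductiveSeparationConstr} with $B = B' \times B_{n_m}$, where $B'$ and $B_{n_m}$ are the respective ``first coordinate nonzero'' open sets. The invariants pulled back from $U'$ separate orbits with distinct images under $\phi$, and for two vectors in $B$ agreeing on the $U'$-component, their $V_{n_m}$-components lie in a common $G$-orbit only if a single group element simultaneously fixes the $U'$-part; since $B'$ forces the leading coordinate of the $U'$-component to be nonzero, the stabilizer of that component in $G$ is trivial (here $G$ acts on $V_{n_i}$ by a unipotent Jordan block, so any nonidentity element moves a vector with nonzero first coordinate). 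Thus the fibre condition of Proposition~\ref{PropInductiveSeparationConstr} collapses to separating $G$-orbits inside $B_{n_m}$, which the invariants from Theorem~\ref{MainTheoIndecomp} applied to $V_{n_m}$ accomplish. Finally, the $r$ trivial summands are handled trivially: their coordinates are themselves invariant and separate the corresponding components outright.

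The claim that these invariants generate the invariant field $\kk(V)^G$ then follows from Proposition~\ref{PropGenSeparatingAndFieldGen}, but only after checking the generation direction independently, since over a field of characteristic $p$ generic separation does not by itself imply field generation (as noted after Proposition~\ref{PropGenSeparatingAndFieldGen}). For this I would again invoke the Campbell--Chuai criterion~\cite[Theorem 2.4]{MR2334859}, exactly as in the proof of Theorem~\ref{MainTheoIndecomp}: within each nontrivial summand the invariants $x_1, N(x_2), f_3, \ldots, f_{n_i}$ already generate that summand's invariant field with the required triangular leading-term structure, and the coordinates of the trivial summands are transcendental generators adjoined freely. Since the summands involve disjoint sets of variables, the transcendence degrees add up correctly to $\trdeg_\kk \kk(V)^G = \dim V = n$, confirming that the full list generates $\kk(V)^G$.

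The main obstacle I anticipate is the fibre-separation step in the induction: one must verify carefully that on the open set $B$ the stabilizer in $G$ of the $U'$-component is trivial, so that two vectors of $B$ lying over the same point of $U'$ but in distinct $G$-orbits of $U$ actually force distinct $G$-orbits already in the single new summand $V_{n_m}$, allowing the indecomposable result to finish the job. This requires the nonvanishing of a leading coordinate somewhere among the nontrivial summands, which is why the open set $B$ must be chosen to impose $c_1 \neq 0$ on \emph{at least one} nontrivial block rather than on all of them; handling the bookkeeping of which summand carries the nonvanishing hypothesis, while ensuring $B$ remains nonempty, $G$-stable, and Zariski open, is the delicate part of the argument.
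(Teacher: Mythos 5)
Your counting is correct, and your instinct that field generation must be proved separately from generic separation is sound, but the central inductive step of your argument is false, and it fails precisely because of the trivial-stabilizer fact that you invoke to justify it. To apply Proposition~\ref{PropInductiveSeparationConstr} with $T\subseteq\kk[V_{n_m}]^G$ you need: whenever $v_1=(u',w_1)$ and $v_2=(u',w_2)$ lie in $B$ and in different $G$-orbits of $U$, the components $w_1,w_2$ lie in different $G$-orbits of $V_{n_m}$. Now take $w_2=\sigma w_1$ with $w_1\in B_{n_m}$, so $w_2\neq w_1$ but $w_1,w_2$ are in the \emph{same} orbit of $V_{n_m}$. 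Then every $f\in\kk[V_{n_m}]^G$ takes equal values on $w_1,w_2$, and every invariant pulled back from $U'$ agrees on $v_1,v_2$ as well; yet $v_1,v_2$ lie in \emph{different} $G$-orbits of $U$: any $h\in G$ with $hv_1=v_2$ fixes $u'$, hence $h=\id$ by the very triviality of $\Stab_G(u')$ you established, which would force $\sigma w_1=w_1$, a contradiction. (The fibre condition you want would hold if $u'$ were fixed by all of $G$; a trivial stabilizer makes it maximally false.) This cannot be repaired by rechoosing $B$: each of your invariants involves the variables of a single summand, hence is invariant under the larger group $G\times\cdots\times G$ ($m$ factors, the $i$-th factor acting on the $i$-th nontrivial summand alone); the map $\psi\colon(u',w)\mapsto(u',\sigma w)$ is a linear automorphism of $V$, so for infinite $\kk$ every nonempty open $B$ meets $\psi^{-1}(B)$, producing inseparable pairs in distinct $G$-orbits. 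Concretely, for $V=V_2\oplus V_2$ the vectors $(1,0,1,0)^t$ and $(1,0,1,1)^t$ lie in different orbits, while $x_{1,1},\,x_{2,1},\,N(x_{1,2}),\,N(x_{2,2})$ all agree on them. So no list of per-summand invariants is generically separating once $m\geq 2$.

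The field-generation argument breaks at the same point. Matching transcendence degrees proves nothing in characteristic $p$ (compare $\kk(x^p)\subsetneq\kk(x)$); what your invariants generate is at best the compositum of the fields $\kk(U_i)^G$, which equals the invariant field of $G\times\cdots\times G$ acting summand-wise and has index $p^{m-1}$ in $\kk(V)^G$. For instance $u_i=x_{1,1}x_{i,2}-x_{i,1}x_{1,2}$ is $G$-invariant but not invariant under the product group, so it is not in that compositum. This is also exactly where the Campbell--Chuai criterion~\cite[Theorem 2.4]{MR2334859} must be handled with care --- both in your sketch and in the paper's own proof, which runs in the opposite order (generation first via Campbell--Chuai, then generic separation via Proposition~\ref{PropGenSeparatingAndFieldGen}): the minimality of the $x_{i,j}$-degree must hold among invariants in \emph{all} previously ordered variables, across summands, and for $i\geq 2$ the norm $N(x_{i,2})$, of degree $p$ in $x_{i,2}$, fails it because $u_i$ has degree $1$ in $x_{i,2}$. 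A proof that works keeps your degree-one invariants and the $f_{i,j}$, keeps a single norm $N(x_{1,2})$, and replaces the remaining norms by the cross invariants $u_i$, $i=2\upto m$ (or by $x_{1,1}^{p-2}u_i$ to stay homogeneous of degree exactly $p$ as in the statement): ordering the variables so that all first coordinates come first, then all second coordinates, and so on, the relevant $x$-degrees of this list are $1,\ldots,1,p,1,\ldots,1$ with product $p=[\kk(V):\kk(V)^G]$, so Campbell--Chuai gives $\kk(V)^G=\kk(\text{these invariants})$, and generic separation then follows from Proposition~\ref{PropGenSeparatingAndFieldGen}.
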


\begin{proof}
Let $V = \oplus _{i=1}^m U_i$ be a decomposition into indecomposable subrepresentations~$U_i$ with $\dim(U_i) = n_i$. Each $U_i$ is isomorphic to a $V_{n_i}$ for an integer $1 \leq n_i \leq p$. We can find bases of $U_i$ such that the dual basis of their union are the indeterminates
\[ x_{1,1} \upto x_{1,n_1},\, x_{2,1} \upto x_{2, n_2} \,  \upto  \, x_{m,n_m} \]
for the coordinate ring $\kk[V]$ and such that a generator $\sigma$ of $G$ acts on $x_{i,j}$ via
\[ \sigma (x_{i,1}) = x_{i,1}, \quad \sigma (x_{i,j}) = x_{i,j-1} + x_{i,j} \quad \text{ for } i = 1 \upto m, j = 2 \upto n_i.\]
For each $i$,  there are homogeneous invariants $f_{i,3} \upto f_{i,n_i} \in \kk[U_i]^G$ of degree 2 or 3 as in Theorem~\ref{MainTheoIndecomp}. As in the proof of that theorem, $f_{i,j}$ is of the smallest positive degree in $x_{i,j}$, hence it follows from~\cite[Theorem 2.4]{MR2334859} that they generate the invariant field. By Proposition~\ref{PropGenSeparatingAndFieldGen} they are also generically separating.

Alternatively, we could use the Zariski open, $G$-stable set \[ B = \{ (c_{1,1} \upto c_{m,n_m}) \in V \mid c_{i,1} \neq 0 \text{ for all } i = 1 \upto m \}, \] and a similar reasoning as in  Theorem \ref{TheoremDegree2andNodd} and \ref{TheoremDegree3AndNeven}, to see that these invariants are generically separating.
\end{proof}

In the end, we add another aspect to the invariants we found. For a fixed $n$ we can consider the operators $\sigma$ and $\Delta = \sigma - \id$ not only on $\kk[V_n]$, but by the same rules also on $\Z[x_1 \upto x_n]$:
\[ \sigma (x_i) = x_{i-1} + x_i  \quad \text{(for } 2\le i \le  n), \quad \text{ and } \sigma (x_1) = x_1.\]
This corresponds to looking at the action of a Jordan block with 1's along the diagonal in characteristic 0, so to the action of an infinite cyclic group $\Z$. There is a map from $\Z[x_1 \upto x_n]^\Z$ to $\kk[V_n]^G$ by reducing coefficients modulo $p$, and invariants in the image of this map are called \emph{integral invariants}.\footnote{In \cite{Shankv4v5} they are actually called \emph{rational invariants}, which we avoid for possible confusion with elements of the invariant field $\kk(V)^G$.} 

Inspecting the proofs of the four lemmas in~Subsections~\ref{SubSecDeg2} and~\ref{SubSecDeg3} again, we see that the invariants we found can be defined over~$\Z$ if we multiply them successively with the determinants of the matrices involved in their construction. These determinants are~$2$ in Lemma~\ref{LemmaDeg2andEven}, $d-3$ in Lemma~\ref{LemmaDeg3andEven}, and 1 elsewhere. It follows that some multiples of 
all $f_m$ for $3\le m\le n$ in Theorem \ref{MainTheoIndecomp} are integral invariants. 
For instance we have
\begin{align*} 2 f_3 &= 2 x_1 x_3 - x_2^2 + x_1 x_2,\\
3 f_4 &= 3 x_1^2 x_4 - 3 x_1 x_2 x_3 + x_2^3 - x_1^2 x_2.
%4 f_5 &=  4x_1 x_5 - 4x_2x_4 +  2x_3^2 -  2 x_2x_3 + 6 x_1x_4 +   x_2^2 -   x_1 x_2.
\end{align*}
%These invariants are part of the generating set for $\kk[V_5]^G$   constructed by Shank~\cite[Theorem 5.1]{Shankv4v5} appearing as inversions of $x_2^2$,  $x_2^3$ and $x_3^2$, respectively.   

 \begin{rem}
Using graded lexicographic order with $x_n > x_{n-1} > \ldots > x_1$, we have constructed integral invariants with lead terms $x_1x_n$ or $x_1^2 x_n$, respectively.  Shank~\cite[Theorem 2.2]{Shankv4v5} showed that using graded reverse lexicographic order with $x_n > x_{n-1} > \ldots > x_1$ there is no integral invariant with lead term $x_1 x_n$ or $x_1^2 x_n$ or, in fact, $m x_n$ for any monomial $m$ in $x_1 \upto x_{n-1}$.
\end{rem}
%note that this is wrong in Campbell-Wehlau-book, Section 7.5, as they use graded lex order there. 

\bibliographystyle{siam}
\bibliography{gensep}

\end{document}